\renewcommand{\v}{\textup{\textsf{v}}}
\newcommand{\e}{\textup{\textsf{e}}}
\newcommand{\tw}{\textup{\textsf{tw}}}
\renewcommand{\d}{\textup{\textsf{d}}}
\theoremstyle{plain}
\newtheorem{thm}{Theorem}[section]
\newtheorem{lem}[thm]{Lemma}
\theoremstyle{definition}
\newcommand{\R}{\mathbb{R}}
\newcommand{\N}{\mathbb{N}}
\newcommand{\ad}{\overline {\textup{\textsf{d}}}}
\begin{document}

\title{Finding dense minors using average degree}

\thanks{The first author is supported by the Institute for Basic Science (IBS-R029-C1). The second and fourth authors are supported by the Natural Sciences and Engineering Research Council of Canada (NSERC). Les deuxi\`eme et quatri\`eme auteurs sont supportés par le Conseil de recherches en sciences naturelles et en génie du Canada (CRSNG). The third author is funded by an ETH Z\"{u}rich Postdoctoral Fellowship.}

\subjclass[2020]{05C07, 05C35, 05C83}
\keywords{Hadwiger's conjecture, Graph minors, Average degree}

\author{Kevin Hendrey}
\address{Discrete Mathematics Group, Institute for Basic Science (IBS), Daejeon, South Korea}
\email{kevinhendrey@ibs.re.kr}
\urladdr{https://sites.google.com/view/kevinhendrey/}

\author{Sergey Norin}
\address{Department of Mathematics and Statistics, McGill University, Montr\'eal, Canada}
\email{snorin@math.mcgill.ca}
\urladdr{www.math.mcgill.ca/snorin/}

\author{Raphael Steiner}
\address{Institute of Theoretical Computer Science, Department of Computer Science, ETH Z\"{u}rich, Switzerland}
\email{raphaelmario.steiner@inf.ethz.ch}
\urladdr{https://sites.google.com/view/raphael-mario-steiner/}

\author{J\'er\'emie Turcotte}
\address{Department of Mathematics and Statistics, McGill University, Montr\'eal, Canada}
\email{mail@jeremieturcotte.com}
\urladdr{www.jeremieturcotte.com}

\begin{abstract}
	Motivated by Hadwiger's conjecture, we study the problem of finding the densest possible $t$-vertex minor in graphs of average degree at least $t-1$. We show that if $G$ has average degree at least $t-1$, it contains a minor on $t$ vertices with at least $(\sqrt{2}-1-o(1))\binom{t}{2}$ edges. We show that this cannot be improved beyond $\left(\frac{3}{4}+o(1)\right)\binom{t}{2}$. Finally,  for $t\leq 6$ we exactly determine the number of edges we are guaranteed to find in the densest $t$-vertex minor in graphs of average degree at least $t-1$.
\end{abstract}

\maketitle


\section{Introduction}

In this paper all graphs are simple and finite. We say a graph $H$ is a \emph{minor} of a graph $G$ if a graph isomorphic to $H$ can be obtained from a subgraph of $G$ by contracting edges (and removing any loops and parallel edges). A \emph{$k$-colouring} of a graph $G$ is an assignment of colours from $\{1,\dots, k\}$ to the vertices of $G$ such that adjacent vertices are assigned distinct colours. The \emph{chromatic number} $\chi(G)$ is the smallest integer $k$ such that $G$ admits a $k$-colouring.

Hadwiger \cite{hadwiger_uber_1943} conjectured that if $\chi(G) \geq t$, then $G$ contains $K_t$, the complete graph on $t$ vertices, as a minor. Hadwiger's conjecture is one of the most famous open problems in graph theory. The study of Hadwiger's conjecture has spawned a large number of variants, strengthenings and relaxations; see \cite{nash_hadwigers_2016} for a survey of this area. 

For instance, there has been much progress in recent years in attempting to find the smallest function $f(t)$ for which $\chi(G)\geq f(t)$ implies that $G$ contains a $K_t$ minor. 
The current best bound $f(t) = \Omega(t\log\log t)$ is due to Delcourt and Postle \cite{delcourt_reducing_2022}.

Another strategy is to approach Hadwiger's conjecture by relaxing the condition that the minor be complete. Seymour \cite{nash_hadwigers_2016,seymour_birs_2017} asks for which graphs $H$ on $t$ vertices does $\chi(G)\geq t$ guarantee $H$ as a minor. Kostochka \cite{kostochka_k_st_2014,kostochka_k_st_2010} as well as the second and fourth authors of this paper \cite{norin_limits_2023} have proved that this holds for large classes of bipartite graphs.

The second author and Seymour \cite{norin_dense_2022} study the related question of maximizing the edge density of $t$-vertex minors of $G$ if $\chi(G)\geq t$, that is finding a minor of $G$ on $t$ vertices with as many edges as possible. Unlike in the previous relaxation we allow the minor to change depending on $G$. It is show in \cite{norin_dense_2022} that if $G$ has $n$ vertices and independence number 2 (and so $\chi(G)\geq \left\lceil\frac{n}{2}\right\rceil$), then $G$ contains a minor on $\left\lceil\frac{n}{2}\right\rceil$ vertices and at least $(0.98688-o(1))\binom{\left\lceil\frac{n}{2}\right\rceil}{2}$ edges.

Letting both the number of vertices and edges of the minor vary, Nguyen \cite{nguyen_linear-sized_2022} showed that there exists $C>0$ such that if $\varepsilon\in (0,\frac{1}{256})$ and $\chi(G)\geq C t\log \log \left(\frac{1}{\varepsilon}\right)$, then $G$ contains a minor on $t$ vertices with at least $(1-\varepsilon)\binom{t}{2}$ edges. Setting $\varepsilon=\frac{1}{t^2}$ recovers the result of Delcourt and Postle mentioned above.

In this paper, we study the following strengthening of the question considered by the second author and Seymour: What is the densest $t$-vertex minor of $G$ if the average degree is at least $t-1$~? This is motivated by the well-known fact that $\chi(G)\geq t$ implies that $G$ contains a subgraph $G'$ with minimum degree at least $t-1$. 

It is a direct consequence of a result of Mader \cite{mader_homomorphieeigenschaften_1967,mader_homomorphiesatze_1968} (see \cref{lem:neighbourhoodmindegree} below) that every graph of average degree at least $t-1$ contains a minor on $t$ vertices with at least $\frac{1}{4}\binom{t}{2}$ edges.

Our main result is the following improvement, which we prove in \cref{sec:lower}. Let $\ad(G)$ denote the average degree of a graph $G$.

\begin{restatable}{thm}{mainthm}\label{thm:main}
	If $t\in \N$ and $G$ is a graph with average degree $\ad(G)\geq t$, then $G$ contains a minor on $t$ vertices with at least $\left(\sqrt 2-1-\frac{24}{t}\right)\binom{t}{2}$ edges.
\end{restatable}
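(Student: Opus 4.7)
My plan is to first apply the standard degree-boosting reduction: by iteratively deleting vertices of degree less than $t/2$, which preserves the hypothesis $\ad(G)\geq t$, pass to a subgraph $H\subseteq G$ of minimum degree $>t/2$. Next, I would apply the Mader-type neighborhood lemma (\cref{lem:neighbourhoodmindegree}) — or a sharpening of it — to find, possibly after passing to a further subgraph $H'\subseteq H$, a vertex $v\in V(H')$ whose neighborhood $A:=N_{H'}(v)$ induces a subgraph $H'[A]$ of edge density close to $\sqrt{2}-1$.

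With such a $v$ in hand, I would construct the $t$-vertex minor as follows. Use $\{v\}$ as one branch set, and singletons $\{u\}$ for $u$ in a carefully chosen $(t-1)$-subset $S\subseteq A$ as the remaining $t-1$ branch sets. The minor then has $(t-1)+e(H'[S])$ edges: the $(t-1)$ star edges from $v$ to $S$, plus those internal to $S$. A greedy or averaging argument shows $e(H'[S])\geq \beta\binom{t-1}{2}$ whenever $H'[A]$ has edge density $\beta$, so once $\beta\geq \sqrt{2}-1-O(1/t)$, the minor achieves the claimed bound.

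The value $\sqrt{2}-1$ should emerge by balancing the two contributions — the $(t-1)$ star edges and the $\beta\binom{t-1}{2}$ internal edges on $S$ — against the edge budget in $G$. Requiring this balance to be self-consistent (in the sense that the density achieved in the minor equals the density needed from $H'[A]$) yields a quadratic relation of the form $(1+c)^2=2$, whose solution is $c=\sqrt{2}-1$. The $\tfrac{24}{t}$ error term absorbs rounding, the discrepancy between minimum and average degree, and small losses from the greedy/averaging step.

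The main technical hurdle is sharpening the density bound on $H'[A]$ from the baseline $\tfrac14$ (directly supplied by the standard Mader lemma) up to $\sqrt{2}-1$. This likely requires either iterating the Mader lemma inside $H'[A]$ — recursively passing to a sub-neighborhood of yet greater density and bootstrapping — or modifying the branch-set construction in the regime $|A|>t-1$ so that surplus neighborhood vertices are absorbed into the branch sets (rather than being discarded by the selection of $S$) in a way that contributes additional cross-edges to the minor. Getting these two arguments to interact cleanly, and showing that the resulting fixed-point genuinely converges at $\sqrt{2}-1$, is where I expect the technical heart of the proof to lie.
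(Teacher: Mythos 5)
Your plan diverges substantially from the paper's, and the step you flag as a ``technical hurdle'' is exactly where the argument breaks. After the Mader reduction (\cref{lem:neighbourhoodmindegree}), every closed neighbourhood $N[u]$ induces a subgraph of minimum degree $>t/2$, but you have \emph{no control on $|N[u]|$}. Your hub construction --- branch set $\{v\}$ plus $t-1$ singletons chosen from $N(v)$ --- produces essentially $e\bigl(H'[S]\bigr) + (t-1)$ edges, and since the star contributes only $O(t)$, the coefficient in front of $\binom{t}{2}$ is set entirely by the density of the sampled part of $N(v)$. If $|N(v)|$ is close to $t$ that density is about $1/2$ and you win, but when $|N(v)|$ is much smaller than $t$ you get a near-clique on $\approx t/2$ vertices and then must pad $S$ with arbitrary vertices, which drops you back to roughly $\tfrac14\binom{t}{2}$ --- precisely the baseline. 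Mader alone does not force any single neighbourhood to have density near $\sqrt2-1$, and the ``self-consistent balance'' giving $(1+c)^2=2$ does not actually arise from this construction: there is no genuine tradeoff between $t-1$ star edges and $\beta\binom{t-1}{2}$ internal edges because the former is lower order. Your two fallbacks (recursively iterating Mader inside $N(v)$, or absorbing surplus vertices into fatter branch sets) are not worked out and neither is what the paper does.

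The paper's key move, which your proposal misses, is to work with \emph{unions} of closed neighbourhoods rather than a single one. \cref{lem:smallsubgraph} grows $X=\bigcup_i N[x_i]$ (this keeps $\delta(G[X])\ge t/2$ automatically) while trying to land $|X|$ in the window $[\alpha t,\beta t]$. Three outcomes: (1) you land in the window, and \cref{lem:fullrandom}/\cref{lem:partialrandom} finish easily with constant $\ge 5/12$; (2) you run out of low-degree vertices while $|X|<\alpha t$, so almost all of $G$ has degree $>\beta t-1$, which via \cref{lem:fullgraphcase} forces $\v(G)$ to be small and sampling $t$ vertices of $G$ wins; (3) adding a single $N[x_k]$ jumps $|X|$ past $\beta t$, producing two sets $X,Y$ of size $<\alpha t$, each with $\delta\ge t/2$, and \cref{lem:partialrandom} (take all of $X$, sample $t-|X|$ vertices of $Y$) gives at least $\tfrac12\bigl(x+\tfrac{(1-x)^2}{x}\bigr)\binom{t}{2}$ edges with $x=|X|/t$. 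Minimizing that expression over $x$ --- not the quadratic $(1+c)^2=2$ --- is what produces $\sqrt2-1$, attained at $x=1/\sqrt2$ where the inequality collapses to $(\sqrt2 x-1)^2\ge 0$. This union-of-neighbourhoods trichotomy is the missing idea in your sketch.
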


Note that in \cref{thm:main}  we assume for convenience that $\ad(G)\geq t$ and not $\ad(G)\geq t-1$ as in the problem above. However, this only affects the lower order terms. 

In \cref{sec:upper}, we show that the constant in the previous result cannot be improved past $\frac{3}{4}$.

\begin{restatable}{thm}{upperthm}\label{thm:upper}
    For $t\in \N$, there exists a graph $G$ with average degree $\ad(G)\geq t$ such that $G$ does not contain a minor on $t$ vertices with more than $\left(\frac{3}{4}+o(1)\right)\binom{t}{2}$ edges.
\end{restatable}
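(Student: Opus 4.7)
The plan is to construct $G$ as the join of a small clique with a large independent set: this structure will force every $t$-vertex minor to sit inside a graph of the form $K_a+\overline{K_b}$ with $a$ bounded, which yields the $3/4$ constant after a short optimization.

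Concretely, set $s := \lfloor t/2\rfloor+1$ and let $r$ be a large positive integer. Define $G$ on vertex set $A\cup B$ (disjoint) where $|A|=s$, $|B|=r$, $A$ induces a clique, $B$ is an independent set, and every pair in $A\times B$ is an edge. A direct calculation gives $\ad(G)=s(s-1+2r)/(s+r)$; since $s>t/2$, taking $r$ to be of order $t^2$ guarantees $\ad(G)\geq t$.

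Now fix any $t$-vertex minor $H$ of $G$, determined by pairwise disjoint connected branch sets $W_1,\ldots,W_t\subseteq V(G)$. Since $B$ is independent, any branch set contained in $B$ is a single vertex; let $k$ be the number of such branch sets. Each of the remaining $t-k$ branch sets contains at least one vertex of $A$, so $t-k\leq|A|=s$, which gives $k\geq t-s=\lceil t/2\rceil-1$. Moreover, any two branch sets that both meet $A$ are joined in $G$ by an edge of $A$'s clique; any branch set meeting $A$ is joined to each singleton in $B$ by an $A$-$B$ edge; and two singletons in $B$ are never joined. Hence $H$ is a subgraph of $K_{t-k}+\overline{K_k}$, so
\[
|E(H)| \leq \binom{t-k}{2}+(t-k)k = \frac{(t-k)(t+k-1)}{2}.
\]

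Finally, writing $g(k):=(t-k)(t+k-1)/2$ gives $g'(k)=(1-2k)/2<0$ for $k\geq 1$, so $g$ is decreasing on the range $k\geq t-s$. Substituting $k=t-s$ and $s=\lfloor t/2\rfloor+1$ then yields
\[
|E(H)| \leq g(t-s) = \frac{s(2t-s-1)}{2} = \left(\frac{3}{4}+O\!\left(\frac{1}{t}\right)\right)\binom{t}{2},
\]
as required. The main obstacle is really just to identify the right extremal construction; once the join structure is in place, the upper bound comes out of the elementary observation that any branch set in the independent part must be a singleton, combined with a one-variable optimization.
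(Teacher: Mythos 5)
Your construction $K_s + \overline{K_r}$ with $s=\lfloor t/2\rfloor+1$ is exactly the paper's graph $S_{k,r,1}$ (and also an $s$-tree, so it is simultaneously a special case of both families the paper uses), and your optimization recovers the same extremal edge count $\binom{s}{2}+s(t-s)$. What you do differently is argue directly on branch sets: since the independent part forces any branch set inside it to be a singleton, you bound $\e(H)$ in one step, rather than the paper's two-step route of first showing (via \cref{lem:minortosubgraph}, or via the treewidth fact that minors of $k$-trees are spanning subgraphs of $k$-trees) that minors of these graphs may be taken to be subgraphs, and then counting edges of subgraphs. Your version is a bit more elementary and self-contained for this particular construction; the paper's lemmas are more general in that they handle all $S_{k,r,s}$ with $s\geq 1$, which the authors use to exhibit a whole family of extremal examples, but for the purpose of proving \cref{thm:upper} alone your streamlined argument is correct and complete.
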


In fact, we will describe a large class of graphs which satisfy the condition of \cref{thm:upper}.

Finally, in  \cref{thm:small}, proved in \cref{sec:small} we determine for small values of $t$ the exact number of edges we are guaranteed to find in the densest $t$-vertex minor.

\begin{restatable}{thm}{smallthm}\label{thm:small}
    If $2\leq t\leq 6$ is an integer and $G$ is a graph with average degree $\ad(G)\geq t-1$, then $G$ contains a minor on $t$ vertices with at least 
    \begin{itemize}
        \item 1 edge if $t=2$,
        \item 3 edges if $t=3$,
        \item 5 edges if $t=4$,
        \item 8 edges if $t=5$, and
        \item 11 edges it $t=6$.
    \end{itemize}
    Furthermore, none of these values can be improved.
\end{restatable}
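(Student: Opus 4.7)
The plan is to handle each $t \in \{2,3,4,5,6\}$ separately, proving the corresponding lower bound and exhibiting a matching construction. The cases $t=2$ and $t=3$ are classical: $\ad(G) \geq 1$ forces an edge, and $\ad(G) \geq 2$ forces a cycle which contracts to a triangle. Tightness at $t \leq 3$ is automatic since the claimed value equals $\binom{t}{2}$, so any example satisfying the hypothesis works.

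For $t=4$, I would use the structure theorem that graphs with no $K_4-e$ (diamond) minor are exactly the cacti (graphs in which every block is $K_2$ or a cycle). An $n$-vertex cactus has at most $\lfloor 3(n-1)/2 \rfloor$ edges and hence $\ad < 3$; contrapositively, $\ad(G) \geq 3$ forces a diamond minor, i.e., a $4$-vertex minor with $5$ edges. For tightness, I would take $K_{2,n}$ with $n$ large: this graph is series-parallel (hence $K_4$-minor-free) and has $\ad \to 4$, so its densest $4$-vertex minor is $K_4 - e$, with $5$ edges.

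The cases $t=5$ and $t=6$ form the technical heart. Suppose for contradiction that $\ad(G) \geq t-1$ but every $t$-vertex minor misses at least $3$ adjacencies (for $t=5$) or at least $5$ adjacencies (for $t=6$) among its $\binom{t}{2}$ potential pairs. I would first apply \cref{lem:neighbourhoodmindegree} to pass to a subgraph where each vertex has a neighbourhood of large minimum degree. I would then build the branch sets greedily---starting from a carefully chosen vertex $v$ and growing $t-1$ connected subgraphs inside $N(v)$ so that $\{v\}$ is automatically adjacent to all others---and perform case analysis on which pairs among the remaining $t-1$ branch sets fail to be adjacent. In each failure configuration, the plan is to augment the offending branch sets by absorbing additional vertices (or performing local swaps) to repair the missing edges without destroying existing ones. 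For tightness at $t=5, 6$, explicit constructions are needed; plausible candidates include blow-ups of small graphs and complete multipartite graphs with carefully chosen part sizes, where sparse cuts structurally block denser $t$-vertex minors while $\ad \geq t-1$ is maintained.

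The main obstacle is the lower bound for $t=6$: with only $4$ allowed non-adjacencies out of $15$, the case analysis has very little slack, and every failure configuration must be resolvable using the average-degree hypothesis. I expect the argument to require a detailed structural dichotomy on the \emph{defect graph} (the set of missing adjacencies among branch sets), potentially a delicate inductive argument, and to use the full hypothesis $\ad(G) \geq 5$ globally rather than just via a minimum-degree subgraph.
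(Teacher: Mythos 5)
Your treatment of $t\in\{2,3,4\}$ is correct and essentially agrees with the paper; the tight example $K_{2,n}$ for $t=4$ is a legitimate variant of the paper's $S_{2,r,1}$ construction, since $K_{2,n}$ is series-parallel and hence $K_4$-minor-free while having average degree $\tfrac{4n}{n+2}\geq 3$ for $n\geq 6$.

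The cases $t=5$ and $t=6$, however, contain a genuine gap: you have a plan, not a proof, and the plan is not the one the paper pursues. The paper bypasses \cref{lem:neighbourhoodmindegree} entirely here (that lemma only guarantees $\delta(G[N[u]])\geq 3$ when $\ad(G)\geq 5$, far too weak to greedily build an $11$-edge $6$-vertex minor inside a single neighbourhood). Instead, for $t=5$ the paper quotes Dirac's extremal theorem: any graph with $\e(G)\geq 2\v(G)-2$ contains a $5$-vertex minor with $8$ edges unless it is a $(K_4,1)$-cockade, and cockades have $\e(G)=2\v(G)-2$ so $\ad(G)<4$. For $t=6$ the paper proves a new extremal theorem (\cref{thm:extremal11}): if $\e(G)\geq\tfrac{5}{2}\v(G)-\tfrac{7}{2}$ then $G$ has a $6$-vertex minor with $11$ edges, apart from a few small exceptions. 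Its proof is a minimum-counterexample induction split into a $3$-connected case (showing every edge lies in two triangles, locating a low-degree vertex, and using Menger's theorem to extend a dense $5$-vertex subgraph to a $6$-vertex minor) and a small-separator case (applying the induction hypothesis to both sides and accounting for the shared clique). Your proposed "case analysis on the defect graph of missing adjacencies among branch sets" does not engage with any of this, and you yourself flag that you do not see how to close it; it is not a proof.

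The tightness constructions for $t=5,6$ are also missing. "Blow-ups and complete multipartite graphs" are unlikely to work: for instance $K_{2,2,2}$ has $\ad=4$ but contracting any edge yields $K_5^-$, a $5$-vertex minor with $9>8$ edges. The paper instead uses $S_{2,r,2}$ for $t=5$ and, for $t=6$, $(K_5^-,2)$-cockades: these have $\e(G)=\tfrac{8}{3}\v(G)-\tfrac{13}{3}$ (so $\ad\geq 5$ once $\v(G)\geq 26$), yet every $6$-vertex graph with $12$ edges is either $3$-connected or contains $K_5$, while the only $3$-connected minors of a $(K_5^-,2)$-cockade are minors of $K_5^-$. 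You would need to exhibit and justify constructions of this kind to complete the theorem.
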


\subsection{Notation}
Let $G$ be a graph. We denote by $V(G)$ the set of vertices of $G$ and $E(G)\subseteq \binom{V(G)}{2}$ the set of edges of $G$. We will write $\v(G)=|V(G)|$ for the number of vertices of $G$ and $\e(G)=|E(G)|$ for the number of edges of $G$. If $u\in V(G)$, we write $N_G(u)$ for the (open) neighbourhood of $u$, $N_G[u]=N_G(u)\cup\{u\}$ for the closed neighbourhood of $u$, and $\deg_G(u)=|N_G(u)|$ for the degree of $u$ in $G$. We denote the minimum degree of $G$ by $\delta(G)=\min_{u\in V(G)}\deg_G(u)$ and the average degree of $G$ by $\ad(G)=\frac{\sum_{u\in V(G)}\deg_G(u)}{\v(G)}$. If $X\subseteq V(G)$, we write $G[X]$ for the subgraph of $G$ induced by $X$, and $G-X=G[V(G)\setminus X]$ for the subgraph obtained by removing the vertices in $X$; if $X=\{u\}$ we will write $G-u$ for $G-X$. If $e\in E(G)$, we write $G-e$ for the graph obtained by removing $e$, and $G/e$ for the graph obtained from $G$ by contracting the edge $e$ (and removing any resulting loops or duplicate edges); in particular $G/e$ is a minor of $G$. If $Z$ is a real-valued random variable, we write $\mathbb E[Z]$ for the expected value of $Z$.

\section{Lower bound}\label{sec:lower}

In this section, we prove \cref{thm:main}. 

The following result of Mader \cite[pages 1--2]{mader_homomorphieeigenschaften_1967} will allow us to get a lower bound on the minimum degree in the neighbourhood of each vertex, by taking a minor of our graph. We include a short proof for the sake of completeness, since the paper~\cite{mader_homomorphieeigenschaften_1967} is only available in German.

\begin{lem}\label{lem:neighbourhoodmindegree}
	If $G$ is a graph, then $G$ contains a minor $H$ such that $\ad(H)\geq \ad(G)$ and $\delta\left(H[N[u]]\right)> \frac{\ad(G)}{2}$ for every $u\in V(H)$.
\end{lem}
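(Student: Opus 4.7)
The plan is to take $H$ to be a minor of $G$ minimizing the number of vertices subject to $\ad(H) \geq \ad(G)$, and then read off both required properties from the minimality by considering vertex deletions (for the degree of $u$ itself in $H[N[u]]$) and edge contractions (for the degree in $H[N[u]]$ of each neighbor $v \in N_H(u)$). Set $d := \ad(G)$. Since $G$ is a minor of itself, such an $H$ exists.

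First I would handle vertex deletions. For any $v \in V(H)$, the graph $H - v$ is a proper minor of $H$ on fewer vertices, so by minimality $\ad(H - v) < d$. Rewriting,
\[
\frac{2(\e(H) - \deg_H(v))}{\v(H) - 1} < d,
\]
which combined with $2\e(H) \geq d\, \v(H)$ yields $\deg_H(v) > d/2$. In particular, $\deg_{H[N[v]]}(v) = \deg_H(v) > d/2$.

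Next I would handle edge contractions. For any edge $uv \in E(H)$, the minor $H/uv$ has $\v(H) - 1$ vertices and exactly $\e(H) - 1 - |N_H(u) \cap N_H(v)|$ edges (after removing the resulting loop and the parallel edges produced by common neighbors). By minimality $\ad(H/uv) < d$, so the same arithmetic as above (using $2\e(H) \geq d\,\v(H)$) gives
\[
|N_H(u) \cap N_H(v)| > \frac{d}{2} - 1.
\]
Then for $v \in N_H(u)$, the degree of $v$ in $H[N_H[u]]$ equals $|N_H(v) \cap N_H(u)| + 1$ (the $+1$ accounting for the edge to $u$), which exceeds $d/2$.

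Combining these two cases, every vertex of $H[N_H[u]]$ has degree strictly greater than $d/2$, so $\delta(H[N_H[u]]) > \ad(G)/2$, and by construction $\ad(H) \geq \ad(G)$, as required. There is no real obstacle here beyond verifying the counting for $H/uv$; the whole argument is a short minimality argument in the spirit of Mader.
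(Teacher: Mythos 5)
Your proof is correct and takes essentially the same approach as the paper: both work with the vertex-minimal minor $H$ satisfying $\ad(H)\geq\ad(G)$ and derive the degree bounds from the fact that vertex deletion and edge contraction would otherwise preserve the average degree. The paper argues by contradiction (picking a minimum-degree neighbour $v$ of $u$ and contracting $uv$), whereas you argue directly by bounding $\deg_H(u)$ via deletion and $|N_H(u)\cap N_H(v)|$ via contraction, but the underlying counting is identical.
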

\begin{proof}
	Let $H$ be a minor of $G$ such that $\ad(H)\geq \ad(G)$ which minimizes $\v(H)$. Suppose for a contradiction that $H$ does not respect the  statement, i.e. there is some $u\in V(H)$ such that $\delta\left(H[N[u]]\right)\leq\frac{\ad(G)}{2}$.
	
	If $\deg_{H}(u)=0$, then it is direct that $\ad(H-u)\geq \ad(H)$, which contradicts the minimality of $H$. Hence, we may suppose that $u$ has at least 1 neighbour.
	
	Given that the degree of $u$ in $H[N[u]]$ is as least as large as the degree in $H[N[u]]$ of every other vertex in $N[u]$, there exists $v\in N(u)$ such that $\deg_{H[N[u]]}(v)\leq\frac{\ad(G)}{2}\leq \frac{\ad(H)}{2}$. In other words, $|N[u]\cap N(v)|\leq\frac{\ad(H)}{2}$. Then,
	$$\ad(H/uv)=\frac{2\e(H/uv)}{\v(H/uv)}=\frac{2(\e(H)-|N[u]\cap N(v)|)}{\v(H)-1}\geq \frac{2\e(H)-\ad(H)}{\v(H)-1}=\frac{\v(H)\cdot \ad(H)-\ad(H)}{\v(H)-1}=\ad(H)$$
	which is a contradiction to the minimality of $H$.
\end{proof}

The next easy lemma tells us when removing vertices does not decrease average degree.

\begin{lem}\label{lem:removingsubset}
	If $G$ is a graph and $X\subsetneq V(G)$ is such that exactly $M$ edges of $G$ have at least one end in $X$ and $M\leq \frac{\ad(G)\cdot|X|}{2}$, then $\ad(G-X)\geq \ad(G)$.
\end{lem}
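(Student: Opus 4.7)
The plan is a direct algebraic manipulation of the definition of average degree, with essentially no combinatorial content. First I would observe that since $X\subsetneq V(G)$, the denominator $\v(G-X)=\v(G)-|X|$ is a positive integer, so $\ad(G-X)$ is well-defined. Next, using $\e(G-X)=\e(G)-M$ together with the identity $2\e(G)=\ad(G)\cdot\v(G)$, I would rewrite
\[
\ad(G-X)=\frac{2(\e(G)-M)}{\v(G)-|X|}=\frac{\ad(G)\cdot \v(G)-2M}{\v(G)-|X|}.
\]

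The desired inequality $\ad(G-X)\geq \ad(G)$ is then equivalent, after clearing the positive denominator, to
\[
\ad(G)\cdot \v(G)-2M\geq \ad(G)(\v(G)-|X|),
\]
which simplifies to $2M\leq \ad(G)\cdot|X|$; this is exactly the hypothesis. So the proof concludes in one line of arithmetic.

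There is no real obstacle here: the lemma is an arithmetic identity encoding the intuition that if the density of edges incident to $X$ does not exceed the global average, then deleting $X$ cannot decrease it. The only points of care are (i) ensuring the denominator $\v(G)-|X|$ is positive (so that $\ad(G-X)$ is defined and cross-multiplication preserves the inequality direction), which is handled by the assumption $X\subsetneq V(G)$, and (ii) being careful that ``exactly $M$ edges have at least one end in $X$'' is precisely the count of edges removed in passing from $G$ to $G-X$, so that $\e(G-X)=\e(G)-M$ with no double counting of edges inside $X$.
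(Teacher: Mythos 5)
Your proof is correct and is essentially the same direct computation as the paper's: both substitute $\e(G-X)=\e(G)-M$ and $2\e(G)=\ad(G)\cdot\v(G)$ and reduce the claim to the hypothesis $2M\leq \ad(G)\cdot|X|$, with the only cosmetic difference being that you clear denominators and isolate the equivalent inequality, while the paper presents it as a one-line chain of (in)equalities.
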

\begin{proof}We may compute directly that
	\begin{align*}
		\ad(G-X)=\frac{2\e(G-X)}{\v(G-X)}=\frac{2\left(\e(G)-M\right)}{\v(G)-|X|}\geq\frac{2\e(G)-\ad(G)\cdot|X|}{\v(G)-|X|}=\frac{\v(G)\cdot \ad(G)-\ad(G)\cdot|X|}{\v(G)-|X|}=\ad(G).
	\end{align*}
\end{proof}

The next lemma allows us to extract a dense subgraph on $t$ vertices; this is a standard application of the first moment method.

\begin{lem}\label{lem:fullrandom}
	If $t\in \N$ and $G$ is a graph with $\v(G)\geq t$, then $G$ contains a subgraph on $t$ vertices with at least $\frac{\ad(G)}{\v(G)}\binom{t}{2}$ edges.
\end{lem}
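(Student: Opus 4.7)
The plan is to apply the first moment method: choose a subset $S \subseteq V(G)$ of size $t$ uniformly at random, and compute the expected number of edges in $G[S]$.

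For each edge $e \in E(G)$, linearity of expectation gives
\[
\mathbb{E}[\e(G[S])] = \sum_{e \in E(G)} \Pr[\text{both ends of } e \text{ lie in } S] = \e(G) \cdot \frac{\binom{\v(G)-2}{t-2}}{\binom{\v(G)}{t}} = \e(G) \cdot \frac{t(t-1)}{\v(G)(\v(G)-1)}.
\]
Substituting $\e(G) = \tfrac{1}{2}\v(G)\cdot\ad(G)$ and simplifying yields
\[
\mathbb{E}[\e(G[S])] = \frac{\ad(G)}{\v(G)-1}\binom{t}{2} \geq \frac{\ad(G)}{\v(G)}\binom{t}{2}.
\]
Since the expected value of $\e(G[S])$ is at least this quantity, there must exist some choice of $S$ of size $t$ achieving at least this many edges, which is the desired subgraph.

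There is essentially no obstacle here; the only thing to be careful about is the edge case when $\v(G) = t$, but then $S = V(G)$ and the inequality becomes an equality after noting $\frac{\ad(G)}{\v(G)}\binom{t}{2} = \frac{\ad(G)}{2}\cdot(t-1) = \e(G)$, so the statement is immediate. One could also present the argument by double-counting the pairs $(S, e)$ with $S$ a $t$-subset and $e \in E(G[S])$, which avoids any probabilistic language; both arguments give the same bound.
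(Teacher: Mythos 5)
Your proof is correct and takes essentially the same approach as the paper: a uniformly random $t$-subset, linearity of expectation over edges, and the same computation. The paper also explicitly notes that the case $t=1$ is trivial (your $\binom{\v(G)-2}{t-2}$ formula implicitly assumes $t\geq 2$), but since the target bound is $0$ when $t=1$, this omission is harmless.
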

\begin{proof}
	The statement is trivial when $t=1$, so we may assume that $t\geq 2$. Let $Z$ be a uniformly random subset of $V(G)$ of size $t$. Given $uv\in E(G)$, the probability that $uv$ is an edge of $G[Z]$ 
 is $\frac{\binom{\v(G)-2}{t-2}}{\binom{\v(G)}{t}}=\frac{t(t-1)}{\v(G)(\v(G)-1)}$. As $\e(G)=\frac{\v(G)\cdot \ad(G)}{2}$, we  have 
	\begin{align*}
		\mathbb E[\e(G[Z])]
		=\frac{\v(G)\cdot \ad(G)}{2}\cdot \frac{t(t-1)}{\v(G)(\v(G)-1)}
		=\frac{\ad(G)}{\v(G)-1}\binom{t}{2}\geq\frac{\ad(G)}{\v(G)}\binom{t}{2}.
	\end{align*}
	Hence, there exists at least one choice of $Z$ such that $\e(G[Z])\geq\frac{\ad(G)}{\v(G)}\binom{t}{2}$. Hence the statement holds for $G[Z]$.
\end{proof}

In the next lemma, we find a dense subgraph on $t$ vertices by extending an already dense, but not large enough, set of vertices $X$ to a set of size $t$ by sampling the remaining vertices in another set $Y$. We will apply this lemma  when $X$ is a union of closed neighbourhoods and $Y$ a closed neighbourhood (or vice versa), the conditions on the minimum degrees of the induced subgraphs on these sets will come from \cref{lem:neighbourhoodmindegree}.

\begin{lem}\label{lem:partialrandom}
	If $t\in \N$, $G$ is a graph and $X,Y\subseteq V(G)$ are such that $|X|\leq t$, $|X\cup Y|\geq t$ and $\delta(G[X]),\delta(G[Y])\geq \frac{t}{2}$, then there is a subgraph of $G$ on $t$ vertices with at least $\left(\frac{1}{2}\left(x+\frac{(1-x)^2}{y}\right)-\frac{1}{t}\right)\binom{t}{2}$ edges, where $x=\frac{|X|}{t}$ and $y=\frac{|Y|}{t}$.
\end{lem}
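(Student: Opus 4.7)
The plan is a first-moment argument on a random $t$-set containing $X$. Let $s := t - |X|$ and $Y' := Y \setminus X$. Since $|X \cup Y| = |X| + |Y'| \geq t$, we have $|Y'| \geq s$, so we may sample a set $T$ uniformly at random from the $s$-subsets of $Y'$; the set $S := X \cup T$ then has size exactly $t$. Decompose
\[
\e(G[S]) = \e(G[X]) + \e(G[T]) + e_G(X, T).
\]
The first term is deterministic, and the hypothesis $\delta(G[X]) \geq t/2$ gives $\e(G[X]) \geq |X|t/4$, already slightly more than $\tfrac{x}{2}\binom{t}{2}$. The main work is to bound $\mathbb{E}[\e(G[T]) + e_G(X, T)]$ from below, after which picking an outcome at least as good as the expectation finishes the proof.

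For each $v \in Y'$, let $a_v := |N(v) \cap X|$ and $b_v := |N(v) \cap Y'|$. Because $Y \subseteq X \cup Y'$ and $X \cap Y' = \emptyset$, we have $a_v + b_v = |N(v) \cap (X \cup Y')| \geq |N(v) \cap Y| \geq t/2$, using the assumption $\delta(G[Y]) \geq t/2$. Summing over $v \in Y'$ gives $e_G(X, Y') + 2\e(G[Y']) \geq |Y'|\,t/2$, so $\e(G[Y']) \geq |Y'|t/4 - e_G(X, Y')/2$. The standard sampling identities read
\[
\mathbb{E}[e_G(X, T)] \;=\; \tfrac{s}{|Y'|}\, e_G(X, Y'), \qquad \mathbb{E}[\e(G[T])] \;=\; \tfrac{s(s-1)}{|Y'|(|Y'|-1)}\, \e(G[Y']).
\]
Substituting and collecting, one finds that the coefficient of $e_G(X, Y')$ in $\mathbb{E}[\e(G[T]) + e_G(X,T)]$ is non-negative whenever $s \leq |Y'|$; dropping that non-negative contribution leaves
\[
\mathbb{E}[\e(G[T]) + e_G(X, T)] \;\geq\; \frac{s(s-1)\, t}{4(|Y'|-1)}.
\]

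Combining, $\mathbb{E}[\e(G[S])] \geq \tfrac{|X|t}{4} + \tfrac{s(s-1)t}{4(|Y'|-1)}$. The final step is a direct calculation: using $|X| = xt$, $s = (1-x)t$, and $|Y'| \leq |Y| = yt$, one verifies this quantity is at least $\bigl(\tfrac{x}{2} + \tfrac{(1-x)^2}{2y}\bigr)\binom{t}{2} - \tfrac{1}{t}\binom{t}{2}$; the $-\tfrac{1}{t}\binom{t}{2}$ slack in the statement is exactly what absorbs the losses from replacing $s(s-1)$ with $s^2$ and $|Y'|-1$ with $|Y|$. The step I expect to require the most care is verifying that the coefficient of $e_G(X, Y')$ mentioned above is non-negative, so that the term may be dropped: we have no lower bound on $e_G(X, Y')$ and thus cannot afford to keep it around. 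Once that is noticed the rest is routine. A few degenerate cases ($s = 0$, or $|Y'| \leq 1$ where the hypergeometric formulas degenerate) are handled by hand and reduce to the bound $\e(G[X]) \geq |X|t/4$ alone.
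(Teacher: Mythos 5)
Your proposal is correct and takes essentially the same approach as the paper: sample a uniformly random $(t-|X|)$-subset $T$ of $Y\setminus X$, lower-bound $\e(G[X])$ by $|X|t/4$, and use the degree condition on $Y$ together with a first-moment estimate on the edges meeting $Y\setminus X$. The only cosmetic difference is that the paper avoids your coefficient-cancellation step by observing directly that every edge of $G[X\cup Y]$ with at least one end in $Y\setminus X$ survives with probability at least $\frac{(t-|X|)(t-|X|-1)}{|Y\setminus X|^2}$ (using $t-|X|\le|Y\setminus X|$), and then multiplying by the lower bound $\frac{t|Y\setminus X|}{4}$ on the number of such edges; both routes land on the same final estimate $\frac{|X|t}{4}+\frac{(t-|X|)(t-|X|-1)t}{4|Y|}$ before the concluding algebra.
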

\begin{proof}
    If $|X|=t$, the statement follows directly by considering the $t$-vertex subgraph $G[X]$, which contains at least $\frac{1}{2}\cdot|X|\cdot\delta(G[X]) \ge \frac{1}{2}\cdot t\cdot \frac{t}{2} \ge \frac{1}{2}\binom{t}{2}$ edges. Hence, we may suppose that $|X|\leq t-1$, and as a consequence that $|Y\setminus X|\geq 1$.

	Let $Y'$ be a uniformly random subset of $Y\setminus X$ of size $t-|X|$ and set $Z=X\cup Y'$, which is possible given that $|X\cup Y|\geq t$ implies $t-|X|\leq |Y\setminus X|$.
	
	The number of edges with both ends in $X$ is at least $\frac{1}{2}\cdot|X|\cdot \delta(G[X])\geq\frac{t|X|}{4}$. The number of edges with both ends in $Y\setminus X$ is $\frac{1}{2}\sum_{v\in Y\setminus X} |N(v)\cap (Y\setminus X)|$ and the number of edges between $X$ and $Y\setminus X$ is $\sum_{v\in Y\setminus X} |N(v)\cap X|$, so the number of edges with both ends in $X\cup Y$ and at least one end in $Y\setminus X$ is
    \begin{align*}
        \frac{1}{2}\sum_{v\in Y\setminus X} |N(v)\cap (Y\setminus X)|+\sum_{v\in Y\setminus X} |N(v)\cap X|
        &\geq \frac{1}{2}\sum_{v\in Y\setminus X}|N(v)\cap (X \cup Y)|\\
        &\geq\frac{1}{2} |Y\setminus X|\cdot \delta(G[X\cup Y])\geq\frac{t|Y\setminus X|}{4}.
    \end{align*}
	
	Suppose $uv$ is an edge of $G[X\cup Y]$ with at least one end in $Y\setminus X$, say $u\in Y\setminus X$. Then, $uv$ is an edge in $G[Z]$ if and only if  $\{u,v\}\subseteq Z$. If $v\in Y\setminus X$, then the probability that $uv$ is an edge is the probability that $\{u,v\}\subseteq Y'$, which is $\frac{\binom{|Y\setminus X|-2}{t-|X|-2}}{\binom{|Y\setminus X|}{t-|X|}}=\frac{(t-|X|)(t-|X|-1)}{|Y\setminus X|\left(|Y\setminus X|-1\right)}\geq\frac{(t-|X|)(t-|X|-1)}{|Y\setminus X|^2}$. If $v\in X$, then the probability that $uv$ is an edge is the probability that $u\in Y'$, which is $\frac{t-|X|}{|Y\setminus X|}\geq \frac{(t-|X|)(t-|X|-1)}{|Y\setminus X|^2}$. Here we use the fact that $t-|X|\leq |Y\setminus X|$
	Hence, we have 
	\begin{align*}
		\mathbb E[\e(G[Z])]
		&\geq \frac{t|X|}{4}+\frac{t|Y\setminus X|}{4}\cdot \frac{(t-|X|)(t-|X|-1)}{|Y\setminus X|^2}\\
		&\geq\frac{t|X|}{4}+\frac{t}{4}\cdot \frac{(t-|X|)(t-|X|-1)}{|Y|}\\
		&\geq\frac{(t-1)\cdot tx}{4}+\frac{t-1}{4}\cdot \frac{(t-tx)(t-tx-1)}{ty}\\
		&=\frac{1}{2}\left(x+\frac{(1-x)(1-x-\frac{1}{t})}{y}\right)\binom{t}{2}\\
		&\geq\left(\frac{1}{2}\left(x+\frac{(1-x)^2}{y}\right)-\frac{1}{t}\right)\binom{t}{2},
	\end{align*}
where in the last step we used that $1-x \le y$. Hence, there is at least one choice of $Y'$ such that $G[Z]=G[X \cup Y']$ has $t$ vertices and at least $\left(\frac{1}{2}\left(x+\frac{(1-x)^2}{y}\right)-\frac{1}{t}\right)\binom{t}{2}$ edges, as desired.
\end{proof}

The next lemma finds a dense subgraph on $t$ vertices given a dense set $X$ if the vertices outside of $X$ all have sufficiently large degree. Contrary to the previous lemma, here the set $X$ is not extended to a set of size $t$, instead its properties will allow us to show that $G$ is itself not too large, and so a good candidate to apply \cref{lem:fullrandom}.

\begin{lem}\label{lem:fullgraphcase}
	Let $t,c\in \N$, $\lambda>1+\frac{3}{t}$ and let $G$ be a graph with $\ad(G)\geq t$ and such that $\ad(H)\leq t$ for all non-null proper subgraphs $H$ of $G$. If $\emptyset \neq X\subsetneq V(G)$ is such that $\delta(G[X])\geq \frac{t}{2}$ and $\deg_G(u)>\lambda t-1$ for all $u\in V(G)\setminus X$, then $G$ contains a subgraph on $t$ vertices with at least $\frac{(\lambda-1-\frac{3}{t})t}{(\lambda-\frac{3}{4})|X|}\binom{t}{2}$ edges.
\end{lem}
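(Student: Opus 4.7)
The plan is to show $\v(G)\le \frac{(\lambda - 3/4)|X|}{\lambda-1-3/t}$; since $\ad(G)\ge t$ forces $\v(G)\ge t+1$, \cref{lem:fullrandom} applied directly to $G$ then produces a $t$-vertex subgraph with at least $\frac{\ad(G)}{\v(G)}\binom{t}{2}\ge \frac{t}{\v(G)}\binom{t}{2}\ge \frac{(\lambda-1-3/t)t}{(\lambda-3/4)|X|}\binom{t}{2}$ edges, as required.

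To bound $\v(G)$, I would first apply the edge-minimality hypothesis to $G-e$ for any $e\in E(G)$ (which is a non-null proper subgraph), yielding $\e(G)\le t\v(G)/2+1$, so $\e(G)$ lies within $1$ of $t\v(G)/2$. Set $n=\v(G)$, $Y=V(G)\setminus X$, $x=|X|$, $y=|Y|$, and let $m$ denote the number of edges with one end in $X$ and one in $Y$. The hypotheses give $\e(G[X])\ge tx/4$ (from $\delta(G[X])\ge t/2$), $\e(G[Y])\le ty/2$ (edge-minimality applied to the non-null proper subgraph $G[Y]$, using $X\subsetneq V(G)$), and $2\e(G[Y])+m=\sum_{v\in Y}\deg_G(v)>(\lambda t-1)y$ (the degree bound on $Y$).

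The heart of the argument is a two-sided bound on $m$. Combining $2\e(G[Y])\le ty$ with the degree sum bound gives $m>((\lambda-1)t-1)y$. Conversely, $\sum_{v\in X}\deg_G(v) = 2\e(G[X])+m\ge tx/2+m$, while $\sum_{v\in X}\deg_G(v)+\sum_{v\in Y}\deg_G(v) = 2\e(G)\le tn+2$, so
\[
\tfrac{tx}{2}+m \le tn+2-\sum_{v\in Y}\deg_G(v) < tn+2-(\lambda t-1)y,
\]
which rearranges to $m<\tfrac{tx}{2}+2-((\lambda-1)t-1)y$. Subtracting the lower bound from the upper yields $2((\lambda-1)t-1)y<\tfrac{tx}{2}+2$, hence $y<\frac{tx+4}{4(\lambda t-t-1)}$, and $n=x+y$ is bounded accordingly. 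A short cross-multiplication shows this upper bound is at most $\frac{(\lambda-3/4)x}{\lambda-1-3/t}$ whenever $x\ge 1$ (which holds since $X\neq\emptyset$) and $\lambda>1+3/t$; the inequality reduces to $2x(6\lambda t-5t-6)\ge 4(\lambda t-t-3)$, which is clear. I expect this bookkeeping to be the main obstacle: the ``$+1$'' edge-minimality slack, the ``$-1$'' inside $(\lambda-1)t-1$, and the gap between ``$3/t$'' and ``$1/t$'' in the respective denominators all need to cooperate, but the constants in the target bound are chosen precisely so that they do.
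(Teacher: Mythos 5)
Your argument is correct, and it reaches the goal by a noticeably different route from the paper. Both proofs follow the same high-level plan — bound $\v(G)$ from above, then invoke \cref{lem:fullrandom} — but the intermediate accounting differs. The paper applies \cref{lem:removingsubset} to $X$ (using $\ad(G-X)\le t\le \ad(G)$) to obtain the flat bound $\sum_{v\in X}\deg_G(v)>\tfrac{3t|X|}{4}$, and then caps the total degree sum by $\v(G)(t+2)$, which it gets by deleting a single \emph{vertex} and using $\ad(G-u)\le t$. You avoid \cref{lem:removingsubset} entirely: you delete a single \emph{edge} to get the tighter global cap $2\e(G)\le t\v(G)+2$, apply $\ad(G[Y])\le t$ directly to $Y=V(G)\setminus X$, and close by sandwiching the number $m$ of $X$--$Y$ edges from both sides. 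After simplification your final inequality $\bigl((\lambda-1)t-1\bigr)|Y|<\tfrac{t|X|}{4}+1$ is in fact strictly sharper than the paper's $\bigl((\lambda-1)t-3\bigr)|Y|<\tfrac{t|X|}{4}+2|X|$, so you obtain a strictly better upper bound on $\v(G)$ than the paper establishes; both comfortably clear the threshold $\frac{(\lambda-3/4)|X|}{\lambda-1-3/t}$, and your closing cross-multiplication is verified correct. One small point to tighten in a final write-up: when you invoke the hypothesis for $G[Y]$, its being \emph{non-null} uses $X\subsetneq V(G)$ (which you cite) but its being a \emph{proper} subgraph uses $X\neq\emptyset$; both are given, but only one is mentioned.
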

\begin{proof}
	First note that $\ad(G-X)<t\leq \ad(G)$. Let $M$ be the number of edges with at least one end in $X$. \cref{lem:removingsubset} implies that $M>\frac{\ad(G)\cdot|X|}{2}\geq \frac{t|X|}{2}$. In particular, we then have that the sum of degrees of vertices in $X$ is
	\begin{align*}
		\sum_{v\in X} \deg_G(v)
		&=\sum_{v\in X} |N(v)\cap X|+\sum_{v\in X} |N(v)\setminus X|\\
        &=\left(\frac{1}{2}\sum_{v\in X} |N(v)\cap X|+\sum_{v\in X} |N(v)\setminus X|\right)+\frac{1}{2}\sum_{v\in X} |N(v)\cap X|\\
		&= M+\frac{1}{2} \sum_{v\in X} |N(v)\cap X|\\
		&>\frac{t|X|}{2}+\frac{|X|\delta(G[X])}{2}\\
		&\geq\frac{3t|X|}{4}.
	\end{align*}
	Furthermore, we have $$\sum_{v\in V(G)\setminus X}\deg_G(v)\geq (\v(G)-|X|)(\lambda t-1).$$

    Let $u$ be any vertex of $G$. As $\ad(G-u)<t$,
    $$\ad(G)=\frac{2\e(G)}{\v(G)}=\frac{2(\e(G-u)+\deg_G(u))}{\v(G)}=\frac{\v(G-u)\ad(G-u)+2\deg_G(u)}{\v(G)}<\ad(G-u)+2\leq t+2.$$
    
    Hence, 
	\begin{align*}
		\v(G)(t+2)> \v(G)\cdot \ad(G)=\sum_{v\in V(G)}\deg_G(v)
		&\geq\frac{3t|X|}{4}+(\v(G)-|X|)(\lambda t-1).
	\end{align*}
	Rearranging yields that
	$$\v(G)< \frac{(\lambda-\frac{3}{4}-\frac{1}{t})|X|}{\lambda-1-\frac{3}{t}}<\frac{(\lambda-\frac{3}{4})|X|}{\lambda-1-\frac{3}{t}}.$$
	
	Since $\ad(G)\geq t$, we in particular have that $\v(G)\geq t$. Finally, to get the desired subgraph we apply \cref{lem:fullrandom} to $G$ to get a subgraph on $t$ vertices with at least
	$$\frac{\ad(G)}{\v(G)}\binom{t}{2}\geq\frac{(\lambda-1-\frac{3}{t})t}{(\lambda-\frac{3}{4})|X|}\binom{t}{2}$$ edges.
\end{proof}

The next lemma is a core element of our proof, which we summarize here. We want to find a set $X$ of vertices such that $G[X]$ has minimum degree at least $\frac{t}{2}$ which has order as close as possible to $t$. We will be taking $X$ to be a union of closed neighbourhoods in our graph (as, by \cref{lem:neighbourhoodmindegree}, we will be able to assume that closed neighbourhoods have this minimum degree); we can construct this set by sequentially adding neighbourhoods of vertices. The closer $|X|$ is to $t$, the denser a subgraph of $G$ of order $t$ we will be able to find; when $|X|>t$ we can use \cref{lem:fullrandom} to sample a dense subset of size exactly $t$ and when $|X|<t$ we can use either \cref{lem:partialrandom} or \cref{lem:fullgraphcase} (depending on the degrees of vertices outside of $X$). In practice, we will introduce some tolerance and attempt to find $X$ such that $\alpha t\leq |X|\leq \beta t$ (this is Case \ref{case1} in the following lemma). The first way of failing is when constructing $X$, at some point the size is smaller than $\alpha t$, but the size jumps over $\beta t$ when adding any other neighbourhood of size smaller than $\alpha t$; this is Case \ref{case3}, to which we will apply \cref{lem:partialrandom}. The other way of failing is to run out of vertices of small degree, in which case we are in Case \ref{case2}. In this case, the fact that all remaining vertices have large degree will allow us to apply \cref{lem:fullgraphcase}. The parameters $\alpha,\beta$ will later be chosen to optimize the trade-offs between these cases.

\begin{lem}\label{lem:smallsubgraph}
	If $t\in \N$, $G$ is a graph such that $\delta(G[N[u]])\geq \frac{t}{2}$ for every $u\in V(G)$, and $\frac{1}{2}\leq\alpha<\beta\in \R$, then either
	\begin{enumerate}[label=(\arabic*)]
		\item\label{case1} there exists $X\subseteq V(G)$ such that $\alpha t\leq |X|\leq \beta t$ and $\delta(G[X])\geq \frac{t}{2}$,
		\item\label{case2} there exists $X\subseteq V(G)$ such that $\frac{t}{2}\leq |X|<\alpha t$, $\delta(G[X])\geq \frac{t}{2}$, and $\deg_G(u)> \beta t-1$ for every $u\in V(G)\setminus X$, or
		\item\label{case3} there exist $X,Y\subseteq V(G)$ such that $|X|,|Y|<\alpha t$, $|X\cup Y|>\beta t$ and $\delta(G[X]),\delta(G[Y])\geq \frac{t}{2}$.
	\end{enumerate}
\end{lem}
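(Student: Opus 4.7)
The approach is to build $X$ greedily as a union of closed neighborhoods. The key structural observation is that if $X = \bigcup_i N[u_i]$ is any union of closed neighborhoods of vertices $u_i \in V(G)$, then automatically $\delta(G[X]) \geq t/2$: each $v \in X$ lies in some $N[u_i]$ and has at least $t/2$ neighbors in $N[u_i] \subseteq X$ by hypothesis. Thus the minimum-degree condition in all three conclusions is automatic, and the task reduces to controlling $|X|$.

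Initialize $X := \emptyset$ and iterate as follows. While $|X| < \alpha t$ and some vertex $u \in V(G) \setminus X$ satisfies $\deg_G(u) \leq \beta t - 1$ (equivalently $|N[u]| \leq \beta t$), pick such a $u$ and consider $X' := X \cup N[u]$. If $|X'| \leq \beta t$, update $X := X'$; this preserves the invariant $|X| \leq \beta t$. Otherwise we have overshot: set $Y := N[u]$. If $|Y| \geq \alpha t$, then $\alpha t \leq |Y| \leq \beta t$ and the single closed neighborhood $Y$ witnesses \cref{case1}. If instead $|Y| < \alpha t$, then since $|X| < \alpha t$ (by the loop condition) and $|X \cup Y| = |X'| > \beta t$, the pair $(X, Y)$ witnesses \cref{case3}.

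If the loop terminates naturally with $|X| \geq \alpha t$, the invariant gives $\alpha t \leq |X| \leq \beta t$, so we are in \cref{case1}. Otherwise the loop halts because every $u \in V(G) \setminus X$ has $\deg_G(u) > \beta t - 1$, and we are in \cref{case2} provided $|X| \geq t/2$; this holds as soon as $X$ contains any closed neighborhood $N[u]$, since $|N[u]| = \deg_G(u) + 1 \geq t/2 + 1$ by the hypothesis applied to $u$ itself. The main subtlety I foresee is the edge case in which $X$ remains empty at termination, i.e., every vertex of $G$ already has degree exceeding $\beta t - 1$; I expect this to be handled by a short separate argument, for instance by starting the iteration with a carefully chosen initial closed neighborhood, or by observing that in the intended applications of the lemma the hypotheses on $\ad(G)$ rule this case out.
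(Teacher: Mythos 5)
Your greedy construction of $X$ as a growing union of closed neighbourhoods, with the observation that $\delta(G[X]) \ge t/2$ is automatic for any such union, is exactly the engine of the paper's proof. The paper organizes the cases a little differently --- it first dispatches the situation where some single $N[u]$ already has size in $[\alpha t, \beta t]$, then forms $A$ as the union of $N[u]$ over all $u$ with $\deg_G(u) < \alpha t - 1$, and if $|A| \ge \alpha t$ takes a minimal sub-union $B = \bigcup_{i=1}^k N[x_i]$ with $|B| \ge \alpha t$, so that dropping $N[x_k]$ yields the pair for case \ref{case3} --- but this is cosmetic; your uniform threshold $\deg_G(u) \le \beta t - 1$ achieves the same case analysis.

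The edge case you flagged is real, and it is in fact a gap in the paper's own proof as well: if every vertex of $G$ has degree greater than $\beta t - 1$, then the paper's set $A$ is empty, and its inference that $\delta(G[A]) \ge t/2$ hence $|A| > t/2$ silently fails. Indeed, the lemma as stated is false in that regime: take $G = K_{10}$, $t = 4$, $\alpha = \frac{1}{2}$, $\beta = 0.6$. The hypothesis $\delta(G[N[u]]) = 9 \ge t/2$ holds, yet case \ref{case1} forces $|X| = 2$ with $\delta(G[X]) = 1 < 2$, case \ref{case2} would require $2 \le |X| < 2$, and case \ref{case3} would require $|X \cup Y| > 2.4$ with $|X|,|Y| \le 1$. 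What rescues the application is exactly your second guess: in the proof of \cref{thm:main} the graph $G$ is taken minor-minimal subject to $\ad(G) \ge t$, which forces $\ad(G) < t + 2$ (this is computed inside the proof of \cref{lem:fullgraphcase}), while $\beta t - 1 = \frac{6}{5}t - 1 \ge t + 2$ once $t \ge 15$; so some vertex has degree at most $\beta t - 1$ and the loop always gets started. In short, your proof is on the same footing as the paper's, and you correctly isolated the one spot both arguments (and, strictly speaking, the lemma statement) would need patching.
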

\begin{proof}
	If there exists $u\in V(G)$ such that $\alpha t-1\leq \deg_G(u)\leq \beta t-1$, then setting $X=N[u]$ we are in Case \ref{case1}. Hence, we may assume that for every $u\in V(G)$, $\deg_G(u)<\alpha t-1$ or $\deg_G(u)>\beta t-1$.
	
	Let $A=\bigcup\{N[u] : u\in V(G), \deg(u)<\alpha t-1\}$. Note that since $\delta(G[N[u]])\ge \frac{t}{2}$ for every $u \in V(G)$, we also have $\delta(G[A])\ge \frac{t}{2}$ and thus $|A|>\frac{t}{2}$.
 
 If $A$ is such that $|A|<\alpha t$, then we are in Case \ref{case2} for $X=A$, since every vertex not in $A$ has degree at least $\alpha t-1$, and hence greater than $\beta t-1$ by the assumption above.
	
	Otherwise, we can find $B=\bigcup_{i=1}^k N[x_i]$, for $x_1,\dots,x_k\in V(G)$ all of degree smaller than $\alpha t-1$, such that $|B|\geq \alpha t$. Pick such a set which minimizes $k$. Again, note that $\delta(G[B])\ge \frac{t}{2}$. If $|B|\leq \beta t$, we are in Case \ref{case1} with $X=B$. Hence suppose that $|B|>\beta t$.
	
	Note that necessarily $k\geq 2$, as if $B=N[x_1]$, we have $|B|=\deg_G(x_1)+1<\alpha t$. By minimality of $k$, $|\bigcup_{i=1}^{k-1} N[x_i]|<\alpha  t$. Hence, Case \ref{case3} holds for $X=\bigcup_{i=1}^{k-1} N[x_i]$ and $Y=N[x_k]$.
\end{proof}

Finally, we are ready to derive
\cref{thm:main}, which we restate for convenience, from the above lemmas.

\mainthm*

\begin{proof}
	The statement is trivial for $t\leq 24$, so assume $t\geq 25$. We may suppose that $\ad(G)$ has no proper minor such that $\ad(G)\geq t$; in particular, $G$ contains no proper minor $H$ such that $\ad(H)\geq \ad(G)$. By \cref{lem:neighbourhoodmindegree} we have that $\delta(G[N[u]])\geq\frac{t}{2}$ for every $u\in V(G)$. 

    Let $\alpha=\frac{4}{5}$, $\beta=\nu=\frac{6}{5}$ and let $\gamma= \sqrt 2-1-\frac{24}{t}$. We wish to prove that $G$ contains a subgraph on $t$ vertices with at least $\gamma\binom{t}{2}$ edges. As $1/2 < \alpha < \beta$ we can apply \cref{lem:smallsubgraph} to $G$ and consider three cases depending on the outcome of  \cref{lem:smallsubgraph} which holds.
	
	First suppose we are in Case \ref{case1}, i.e.   there exists $X\subseteq V(G)$ such that $\alpha t\leq |X|\leq \beta t$ and $\delta(G[X])\geq \frac{t}{2}$. There are two subcases here. The first is $t\leq |X|\leq \beta t$. \cref{lem:fullrandom} applied to $G[X]$ then ensures that $G$ contains a subgraph on $t$ vertices with at least $$\frac{\ad(G[X])}{\v(G[X])}\binom{t}{2}\geq \frac{\frac{t}{2}}{\beta t}\binom{t}{2}=\frac{1}{2\beta}\binom{t}{2} = \frac {5}{12} \binom{t}{2} >\gamma\binom{t}{2}$$ edges. 
	
	The other subcase is $\alpha t\leq |X|\leq t$. In the following, we assume without loss of generality that $X$ is chosen of maximum size subject to satisfying the conditions of Case~(1) in Lemma~\ref{lem:smallsubgraph} as well as $|X|\leq t$. In particular, this implies that for every vertex $u \in V(G)\setminus X$, we have $|X \cup N[u]|>t$, for otherwise we could replace $X$ with $X \cup N[u]$, contradicting the maximality assumption. 
 
 Let us consider two possibilities. The first is that there exists $u\in V(G)\setminus X$ such that $\deg_G(u)\leq \nu t-1$. Let $Y=N[u]$ and write $|X|=xt$ and $|Y|=yt$. By \cref{lem:partialrandom} there exists a subgraph of $G$ on $t$ vertices with at least $$\left(\frac{1}{2}\left(x+\frac{(1-x)^2}{y}\right)-\frac{1}{t}\right)\binom{t}{2}\geq\left(\frac{1}{2}\left(x+\frac{(1-x)^2}{\nu}\right)-\frac{1}{t}\right)\binom{t}{2}$$ edges. Hence, the theorem holds in  this case as
	\begin{equation*}
		\gamma\leq \min_{\substack{\alpha\leq x\leq 1}} \frac{1}{2}\left(x+\frac{(1-x)^2}{\nu}\right)-\frac{1}{t} =  \frac{1}{2}\left(\alpha+\frac{(1-\alpha)^2}{\nu}\right)-\frac{1}{t}  = \frac{5}{12}-\frac{1}{t}.
	\end{equation*}
	
	Otherwise, $\deg_G(u)> \nu t-1$ for every $u\in V(G)\setminus X$. By \cref{lem:fullgraphcase} $G$ contains a subgraph on $t$ vertices with at least $$\frac{(\nu-1-\frac{3}{t})t}{(\nu-\frac{3}{4})|X|}\binom{t}{2}\geq\frac{(\nu-1-\frac{3}{t})t}{(\nu-\frac{3}{4}) t}\binom{t}{2}>\left(\frac{\nu-1}{\nu-\frac{3}{4}}-\frac{12}{t}\right)\binom{t}{2} = \left(\frac{4}{9}-\frac{12}{t}\right)\binom{t}{2} >  \gamma\binom{t}{2}$$
	edges, where the second inequality holds as  $\nu>1$. This finishes the proof in the case that outcome \ref{case1} of \cref{lem:smallsubgraph} holds.

	Now suppose that outcome \ref{case2} of \cref{lem:smallsubgraph} holds, i.e.  there exists $X\subseteq V(G)$ such that $\frac{t}{2}\leq |X|<\alpha t$, $\delta(G[X])\geq \frac{t}{2}$, and $\deg_G(u)> \beta t-1$ for every $u\in V(G)\setminus X$.  By \cref{lem:fullgraphcase}, $G$ contains a subgraph on $t$ vertices with at least $$\frac{(\beta-1-\frac{3}{t})t}{(\beta-\frac{3}{4})|X|}\binom{t}{2}>\frac{(\beta-1-\frac{3}{t})t}{(\beta-\frac{3}{4}) t}\binom{t}{2}>\left(\frac{\beta-1}{(\beta-\frac{3}{4})}-\frac{24}{t}\right)\binom{t}{2} = \left(\frac{4}{9}-\frac{24}{t}\right)\binom{t}{2} >  \gamma\binom{t}{2}$$
	edges, where the second inequality uses $\beta>1$. 
	
	Finally, suppose that outcome \ref{case3} of \cref{lem:smallsubgraph} holds, i.e. there exist $X,Y\subseteq V(G)$ such that $|X|,|Y|<\alpha t$, $|X\cup Y|>\beta t$ and $\delta(G[X]),\delta(G[Y])\geq \frac{t}{2}$. Without loss of generality suppose $|X|\geq |Y|$ and let $|X|=xt$ and $|Y|=yt$. By \cref{lem:partialrandom} 
 there exists a subgraph of $G$ on $t$ vertices with at least $$\left(\frac{1}{2}\left(x+\frac{(1-x)^2}{y}\right)-\frac{1}{t}\right)\binom{t}{2} \geq \left(\frac{1}{2}\left(x+\frac{(1-x)^2}{x}\right)-\frac{1}{t}\right)\binom{t}{2} $$ edges. Hence, it suffices to show that 
 $$ \frac{1}{2}\left(x+\frac{(1-x)^2}{x}\right) \geq \sqrt{2}-1.$$
This last inequality simplifies to $\left(\sqrt{2} x -1\right)^2 \geq 0$ for $x > 0 $ and hence holds for all such $x$, as desired.
\end{proof}

\section{Upper bound}\label{sec:upper}

In this section, we prove \cref{thm:upper}. We first need the following definitions.

For $k\in \N$, \emph{$k$-trees} are the graph family defined in a recursive manner as follows:
\begin{itemize}
    \item The complete graph $K_{k+1}$ is a $k$-tree.
    \item If $G$ is a $k$-tree and $C \subseteq V(G)$ is a clique in $G$ with $|C|=k$, then the graph obtained from $G$ by adding a new vertex with neighbourhood $C$ is also a $k$-tree.
\end{itemize}
It follows readily from this definition that for any $k$-tree $G$, $\e(G)=\binom{k}{2}+k(\v(G)-k)$. Furthermore, every minor of a $k$-tree with at least $k+1$ vertices is also a spanning subgraph of a $k$-tree. Indeed, the treewidth $\tw(G)$ of a graph $G$ with at least $k+1$ vertices is at most $k$ if and only if $G$ is a spanning subgraph of a $k$-tree \cite{scheffler_graphs_1986,wimer_linear_1987}, and it is well-known that treewidth is minor-monotone (that is, $\tw(G')\le \tw(G)$ for every minor $G'$ of a graph $G$). Sufficiently large $k$-trees, with appropriately chosen parameter $k$, will be our first candidates for \cref{thm:upper}.

Let $S_r=K_{1,r}$ be the star graph with $r$ leaves. We define the graph $S_{k,r,s}$ as the graph obtained from $S_r$ by replacing every leaf by cliques $A_1,\dots,A_r$ on $s$ vertices and replacing the central vertex by a clique $C$ on $k$ vertices. In particular, every vertex of $C$ is adjacent to every other vertex in the graph. Such vertices are said to be \emph{universal}.

These graphs, with appropriately chosen parameters, will also be candidates for \cref{thm:upper}. First note that $$\ad(S_{k,r,s})=\frac{2\left(\binom{k}{2}+r\binom{s}{2}+k r s\right)}{k+rs}.$$

Given graphs $G$ and $H$, we say the collection $(B_u)_{u\in V(H)}$ of pairwise disjoint non-empty subsets of $V(G)$ is a \emph{model} of $H$ in $G$ if $G[B_u]$ is connected for every $u\in V(H)$ and $G$ contains at least one edge between $B_u$ and $B_v$ if $uv\in E(H)$. It is easy to see that there exists a model of $H$ in $G$ if and only if $H$ is a minor of $G$. It is also direct that if $|B_u|=1$ for every $u\in V(H)$, then $G$ contains a subgraph isomorphic to $H$ (precisely, on vertex set $\bigcup_{u\in V(H)}B_u$).

We now show that with these graphs, we may restrict ourselves to finding a dense subgraph on $t$ vertices, which is simpler than finding minors.

\begin{lem}\label{lem:minortosubgraph}
    If $k,r,s\in \N$ and $H$ is a minor of $S_{k,r,s}$, then $S_{k,r,s}$ has a subgraph isomorphic to $H$.
\end{lem}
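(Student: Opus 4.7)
The plan is to build an injective vertex map $\phi : V(H) \to V(S_{k,r,s})$ that preserves edges, which will directly exhibit $H$ as a subgraph of $G = S_{k,r,s}$. Starting from any model $(B_u)_{u \in V(H)}$ witnessing $H$ as a minor of $G$, I would select one representative $\phi(u) \in B_u$ per branch set; injectivity of $\phi$ will then be automatic from the pairwise disjointness of the branch sets.

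The key structural observation driving the choice of representative is that $G - C$ decomposes as the disjoint union of the cliques $A_1, \dots, A_r$, since there are no edges of $G$ between distinct $A_i$'s. As each induced subgraph $G[B_u]$ must be connected, every branch set either (i) meets $C$, or (ii) is contained entirely in a unique clique $A_{i(u)}$. In case (i), I would let $\phi(u)$ be any vertex of $B_u \cap C$; in case (ii), I would let $\phi(u)$ be any vertex of $B_u$, which then lies in $A_{i(u)}$.

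To verify edge-preservation, I would consider $uw \in E(H)$ and split into cases. If $\phi(u) \in C$ or $\phi(w) \in C$, the corresponding representative is a universal vertex of $G$ and is therefore adjacent to the other. Otherwise both representatives lie inside cliques, with $B_u \subseteq A_{i(u)}$ and $B_w \subseteq A_{i(w)}$; the existence of an edge between $B_u$ and $B_w$ in $G$ (required by the model) together with the absence of edges of $G$ between distinct $A_i$'s forces $i(u) = i(w)$, so $\phi(u)$ and $\phi(w)$ lie in the same clique $A_{i(u)}$ and are adjacent.

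There is no real obstacle here. The crux is the observation that $C$ is a set of universal vertices and the components of $G - C$ are pairwise non-adjacent cliques; this immediately forces each branch set either to have a universal representative in $C$ or to be pinned inside a single clique, which is enough structure to turn the minor model into a subgraph embedding directly.
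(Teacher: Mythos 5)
Your proof is correct. You use the same structural observation as the paper (that $C$ consists of universal vertices and the components of $G-C$ are the pairwise non-adjacent cliques $A_1,\dots,A_r$, so each connected branch set either meets $C$ or lies inside a single $A_i$), but you apply it more directly: rather than taking a model minimizing $\sum_u |B_u|$ and deriving a contradiction from a branch set of size $\geq 2$, you pick a representative from every branch set at once and verify adjacency. This avoids the minimality setup and is slightly cleaner, but the underlying idea and the case analysis are the same.
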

\begin{proof}
    Let $\mathcal B=(B_u)_{u\in V(H)}$ be a model of $H$ in $G$ which minimizes $\sum_{u\in V(H)}|B_u|$. If $|B_u|=1$ for every $u\in V(H)$, then we are done by the above remark. Hence, assume $|B_v|\geq 2$ for some $v\in V(H)$.

    If $B_v\subseteq A_i$ for some $1\leq i\leq r$, let $x\in B_v$. If $B_v\cap C\neq \emptyset$, let $x\in C\cap B_v$. Given the structure of $S_{k,r,s}$ and that the subgraph induced by $B_v$ is connected, these are the only two possible cases. Let $B_v'=\{x\}$ and $B_u'=B_u$ for $u\in V(H)\setminus\{v\}$. 
    
    It is easy to verify that in both of these cases, if $w\in V(G)\setminus B_v$ is adjacent to at least one vertex of $B_v$, then $w$ is adjacent to $x$. Hence, $(B_u')_{u\in V(H)}$ is a model of $H$ in $G$ which contradicts the minimality of $\mathcal B$.
\end{proof}

We now compute an upper bound on the density of $t$-vertex subgraphs of $S_{k,r,s}$.

\begin{lem}\label{lem:numberedges}
    If $k,r,s,t\in \N$ are such that $k+rs\geq t\geq k$, then $S_{k,r,s}$ does not contain a subgraph on $t$ vertices with more than
    \begin{equation}\label{eq:upperedges}
        f(k,s,t)=\binom{k}{2}+k(t-k)+\left\lfloor\frac{t-k}{s}\right\rfloor\binom{s}{2}+\binom{t-k-\left\lfloor\frac{t-k}{s}\right\rfloor s}{2}
    \end{equation}
    edges.
\end{lem}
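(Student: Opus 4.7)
The plan is to reduce the problem to a simple integer optimization over how many vertices of $C$ and each $A_i$ are chosen. Since the edge count of a subgraph is maximized by passing to the induced subgraph on the same vertex set, it suffices to bound $\e(S_{k,r,s}[V'])$ over $t$-element subsets $V' \subseteq V(S_{k,r,s})$. Writing $c := |V' \cap C|$ and $a_i := |V' \cap A_i|$, the fact that $C$ is universal, each $A_i$ is a clique of size $s$, and distinct $A_i$'s are non-adjacent yields
\[
  \e(S_{k,r,s}[V']) \;=\; \binom{c}{2} + c(t-c) + \sum_{i=1}^r \binom{a_i}{2},
\]
subject to $c + \sum_i a_i = t$, $0 \le c \le k$, and $0 \le a_i \le s$ for each $i$.

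First I would show that an extremal $V'$ satisfies $c = k$. If $c < k$, choose some $u \in C \setminus V'$ (which exists as $c < k$) and some $v \in V' \cap A_j$ (which exists as $c < t$). Swapping $v$ for $u$ destroys exactly $c + a_j - 1$ edges (the degree of $v$ inside $V'$) and creates exactly $t-1$ edges (since the swapped-in $u$ is universal and $|V'| = t$ after the swap). Because $t \geq c + a_j$, this swap does not decrease the edge count, and iterating produces an extremal configuration with $c = k$; this is feasible because $t \ge k$.

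With $c = k$ fixed, it remains to maximize $\sum_{i=1}^r \binom{a_i}{2}$ subject to $\sum_i a_i = t - k$ and $0 \le a_i \le s$. The function $x \mapsto \binom{x}{2}$ is convex, so a standard smoothing argument shows that at an optimum at most one $a_i$ lies in the open interval $(0, s)$: if two did, shifting a unit from the smaller to the larger strictly increases the objective without violating the box constraints. Hence the maximum is attained at a configuration with $q := \lfloor (t-k)/s \rfloor$ coordinates equal to $s$, one coordinate equal to $r' := t - k - qs$, and the rest equal to $0$. The hypothesis $t \le k + rs$ guarantees feasibility: it gives $qs + r' \le rs$, and since $rs$ is a multiple of $s$ whereas $qs + r'$ is not whenever $r' > 0$, this forces $r \ge q + 1$ in that case. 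Substituting yields exactly the claimed bound $f(k, s, t)$.

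The argument is essentially a routine optimization over integer compositions, and no step is genuinely difficult. The only point worth being careful about is that the hypothesis $k + rs \ge t$ is precisely the right feasibility condition for the extremal configuration identified by the convexity step, which follows from the divisibility observation above.
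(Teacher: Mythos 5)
Your proposal is correct and takes essentially the same approach as the paper: both reduce to induced subgraphs, show via a vertex-swap (exchanging a non-$C$ vertex for a universal vertex of $C$) that an extremal set contains all of $C$, and then optimize $\sum_i \binom{a_i}{2}$ by the standard convexity/smoothing argument forcing all but one $a_i$ into $\{0,s\}$. The only cosmetic difference is that the paper packages the ``$C\subseteq X$'' step as a double-extremal choice (maximize edges, then $|X\cap C|$) while you iterate a non-decreasing swap directly, and you additionally spell out the feasibility check $r\ge \lfloor (t-k)/s\rfloor + \mathds{1}[r'>0]$, which the paper leaves implicit since its $a_i$'s come from an actual vertex subset.
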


\begin{proof}
    Let $X\subseteq V(S_{k,r,s})$ such that $|X|=t$ which maximizes the number of edges in $G[X]$ with first priority and then, subject to $\e(G[X])$ being maximum, maximizes $|X\cap C|$ with second priority. This is possible given that $\v(S_{k,r,s})=k+rs\geq t$. Our goal is thus to upper bound $\e(G[X])$.
    
    We first claim that $C\subseteq X$. Suppose to contrary that there exists $c\in C\setminus X$. Given that $|X|=t\geq k=|C|>|C\setminus\{c\}|$, this implies there exists $x\in X\setminus C$. Let $X_0=X\setminus \{x\}$ and $X'=X_0\cup \{c\}$.
    Then,
    $$\e(G[X])
    =\e\left(G\left[X_0\right]\right)+\left|N(x)\cap X_0\right|
    \leq \e\left(G\left[X_0\right]\right)+\left|N(c)\cap X_0\right|
    = \e(G[X']), $$
    where the inequality follows from the fact that $c$ is a universal vertex of $G$. Since $|X' \cap C|>|X \cap C|$, this contradicts our choice of $X$. Hence, we have $C\subseteq X$.

    The number of edges in $G[C]$ is $\binom{k}{2}$. Given that $|X\setminus C|=t-k$ and every vertex in $X\setminus C$ is connected to all $k$ vertices in $C$, the number of edges between $C$ and $X\setminus C$ is $k(t-k)$.
    
    For $1\leq i\leq r$, let $a_i=|X \cap A_i|$. In particular, $\sum_{i=1}^r{a_i}=t-k$ and for every $i$ we have $0\leq a_i\leq s$. Then $\e(G[X\setminus C])=\sum_{i=1}^r\binom{a_i}{2}$. Suppose there exist distinct $1\leq i,j\leq r$ such that $0<a_i,a_j<s$. Without loss of generality, suppose $a_i\geq a_j$. Under this assumption, it is easy to verify that $\binom{a_i}{2}+\binom{a_j}{2}< \binom{a_i+1}{2}+\binom{a_j-1}{2}$, and so by choosing one more vertex in $A_i$ and one fewer vertex in $A_j$ we could obtain a subgraph with more edges, contradicting the maximality of $\e(G[X])$. Hence $a_i\in \{0,s\}$ for all except at most one index $i \in \{1,\ldots,r\}$. It then follows from $\sum_{i=1}^r{a_i}=t-k$ that $a_i=s$ for exactly $\left\lfloor\frac{t-k}{s}\right\rfloor$ choices of $i$, and the possible remaining non-empty set of the form $X \cap A_i$ contains exactly $(t-k)-\left\lfloor\frac{t-k}{s}\right\rfloor s$ vertices. Hence, $G[X\setminus C]$ contains exactly $\left\lfloor\frac{t-k}{s}\right\rfloor\binom{s}{2}+\binom{t-k-\left\lfloor\frac{t-k}{s}\right\rfloor s}{2}$ edges. This concludes the proof of the lemma.
\end{proof}

We may now prove \cref{thm:upper}, which we restate for convenience.

\upperthm*

\begin{proof}
    We prove the theorem in 2 ways.

    \vskip 3mm
    \noindent\textbf{Using $k$-trees}
    
    In order to prove the theorem, consider any $k(t)$-tree $G$, where $k(t)=\left(\frac{1}{2}+o(1)\right)t>\frac{t}{2}$, and for which $\v(G)$ is sufficiently large (as a function of $k(t)$) such that
    $$\ad(G)=2\frac{\binom{k(t)}{2}+k(t)(\v(G)-k(t))}{\v(G)}=2k(t)-\frac{(k(t)+1)k(t)}{\v(G)}> t.$$

    Let $G'$ be any minor of $G$ on $t$ vertices. As noted earlier, $G'$ must be a spanning subgraph of some $k(t)$-tree. Hence,
    $$\e(G')\leq \binom{k(t)}{2}+k(t)(t-k(t))=\left(\frac{1}{2}+o(1)\right)^2\frac{t^2}{2}+\left(\frac{1}{2}+o(1)\right)^2t^2=\left(\frac{3}{4}+o(1)\right)\binom{t}{2},$$
    as desired.

    See \cref{fig:ksumexample} for an example of such a graph.

    \begin{figure}
         \centering
         \begin{tikzpicture}[scale=0.75,
			dot/.style = {circle, fill, minimum size=#1,
			inner sep=0pt, outer sep=0pt},
			dot/.default = 4pt]
            
            \node[dot] (1) at (1,0) {};
            \node[dot] (2) at (2,0) {};
            \node[dot] (3) at (3,0) {};
            \node[dot] (4) at (4,0) {};
            \node[dot] (5) at (5,0) {};
            \node[dot] (6) at (6,0) {};
            \node[dot] (7) at (7,0) {};
            \node[dot] (8) at (8,0) {};
            \node[dot] (9) at (9,0) {};
            \node[dot] (10) at (10,0) {};
            \node[dot] (11) at (11,0) {};
            \node[dot] (12) at (12,0) {};
            \node[dot] (13) at (13,0) {};
            \node[dot] (14) at (14,0) {};
            \node[dot] (15) at (15,0) {};
            \node[dot] (16) at (16,0) {};
            \node[dot] (17) at (17,0) {};
            \node[dot] (18) at (18,0) {};
            \node[dot] (19) at (19,0) {};
            \node[dot] (20) at (20,0) {};
            \node[dot] (21) at (21,0) {};
   
			\draw (1) to (2);
            \draw (2) to (3);
            \draw (3) to (4);
            \draw (4) to (5);
            \draw (5) to (6);
            \draw (6) to (7);
            \draw (7) to (8);
            \draw (8) to (9);
            \draw (9) to (10);
            \draw (10) to (11);
            \draw (11) to (12);
            \draw (12) to (13);
            \draw (13) to (14);
            \draw (14) to (15);
            \draw (15) to (16);
            \draw (16) to (17);
            \draw (17) to (18);
            \draw (18) to (19);
            \draw (19) to (20);
            \draw (20) to (21);

            \draw[bend left] (1) to (3);
            \draw[bend left] (1) to (4);
            \draw[bend left] (1) to (5);
            \draw[bend left] (1) to (6);

            \draw[bend right] (2) to (4);
            \draw[bend right] (2) to (5);
            \draw[bend right] (2) to (6);
            \draw[bend right] (2) to (7);

            \draw[bend left] (3) to (5);
            \draw[bend left] (3) to (6);
            \draw[bend left] (3) to (7);
            \draw[bend left] (3) to (8);

            \draw[bend right] (4) to (6);
            \draw[bend right] (4) to (7);
            \draw[bend right] (4) to (8);
            \draw[bend right] (4) to (9);

            \draw[bend left] (5) to (7);
            \draw[bend left] (5) to (8);
            \draw[bend left] (5) to (9);
            \draw[bend left] (5) to (10);

            \draw[bend right] (6) to (8);
            \draw[bend right] (6) to (9);
            \draw[bend right] (6) to (10);
            \draw[bend right] (6) to (11);

            \draw[bend left] (7) to (9);
            \draw[bend left] (7) to (10);
            \draw[bend left] (7) to (11);
            \draw[bend left] (7) to (12);

            \draw[bend right] (8) to (10);
            \draw[bend right] (8) to (11);
            \draw[bend right] (8) to (12);
            \draw[bend right] (8) to (13);

            \draw[bend left] (9) to (11);
            \draw[bend left] (9) to (12);
            \draw[bend left] (9) to (13);
            \draw[bend left] (9) to (14);

            \draw[bend right] (10) to (12);
            \draw[bend right] (10) to (13);
            \draw[bend right] (10) to (14);
            \draw[bend right] (10) to (15);

            \draw[bend left] (11) to (13);
            \draw[bend left] (11) to (14);
            \draw[bend left] (11) to (15);
            \draw[bend left] (11) to (16);

            \draw[bend right] (12) to (14);
            \draw[bend right] (12) to (15);
            \draw[bend right] (12) to (16);
            \draw[bend right] (12) to (17);

            \draw[bend left] (13) to (15);
            \draw[bend left] (13) to (16);
            \draw[bend left] (13) to (17);
            \draw[bend left] (13) to (18);

            \draw[bend right] (14) to (16);
            \draw[bend right] (14) to (17);
            \draw[bend right] (14) to (18);
            \draw[bend right] (14) to (19);

            \draw[bend left] (15) to (17);
            \draw[bend left] (15) to (18);
            \draw[bend left] (15) to (19);
            \draw[bend left] (15) to (20);

            \draw[bend right] (16) to (18);
            \draw[bend right] (16) to (19);
            \draw[bend right] (16) to (20);
            \draw[bend right] (16) to (21);

            \draw[bend left] (17) to (19);
            \draw[bend left] (17) to (20);
            \draw[bend left] (17) to (21);

            \draw[bend right] (18) to (20);
            \draw[bend right] (18) to (21);

            \draw[bend left] (19) to (21);
		\end{tikzpicture}
        \caption{Example of a $k$-tree in the proof of \cref{thm:upper}: $\left\lceil\frac{t+1}{2}\right\rceil$-th power of a path, here illustrated for $t=8$.}
        \label{fig:ksumexample}
    \end{figure}

    \begin{figure}
     \centering
     \begin{subfigure}[b]{0.49\textwidth}
         \centering
         \begin{tikzpicture}[scale=0.75,
			dot/.style = {circle, fill, minimum size=#1,
			inner sep=0pt, outer sep=0pt},
			dot/.default = 4pt]

            \clip (0,-0.85) rectangle (10,4);
            
            \node(T1) at (3.75,3) {};
            \node(T2) at (4.75,3) {};
            \node(T3) at (5.75,3) {};

            \node[dot] (B1) at (1,0) {};
            \node[dot] (B2) at (2,0) {};
            \node[dot] (B3) at (3,0) {};
            \node[dot] (B4) at (4,0) {};
            \node[dot] (B5) at (6,0) {};
            \node[dot] (B6) at (7,0) {};
            \node[dot] (B7) at (8,0) {};
            \node[dot] (B8) at (9,0) {};
            \node (B9) at (5,0) {\dots};
   
			\draw (T1) to (B1);
            \draw (T1) to (B2);
            \draw (T1) to (B3);
            \draw (T1) to (B4);
            \draw (T1) to (B5);
            \draw (T1) to (B6);
            \draw (T1) to (B7);
            \draw (T1) to (B8);

            \draw (T2) to (B1);
            \draw (T2) to (B2);
            \draw (T2) to (B3);
            \draw (T2) to (B4);
            \draw (T2) to (B5);
            \draw (T2) to (B6);
            \draw (T2) to (B7);
            \draw (T2) to (B8);

            \draw (T3) to (B1);
            \draw (T3) to (B2);
            \draw (T3) to (B3);
            \draw (T3) to (B4);
            \draw (T3) to (B5);
            \draw (T3) to (B6);
            \draw (T3) to (B7);
            \draw (T3) to (B8);

            \filldraw[fill=white] (4.75,3) ellipse (1.5 and 0.8);

            \filldraw[fill=white] (2.5,0) ellipse (2 and 0.8);
            \filldraw[fill=white] (7.5,0) ellipse (2 and 0.8);

            \node at (4.75,3) {$\approx \frac{t}{4}$};
            \node at (2.5,0) {$\approx \frac{t}{2}$};
            \node at (7.5,0) {$\approx \frac{t}{2}$};
		\end{tikzpicture}
         \caption{$s(t)= \left(\frac{1}{2}+o(1)\right)t$}
         \label{subfig:tdiv2}
     \end{subfigure}
     \hfill
     \begin{subfigure}[b]{0.49\textwidth}
         \centering
         \begin{tikzpicture}[scale=0.75,
			dot/.style = {circle, fill, minimum size=#1,
			inner sep=0pt, outer sep=0pt},
			dot/.default = 4pt]

            \clip (0,-0.85) rectangle (10,4);
            
            \node(T1) at (3,3) {};
            \node(T2) at (4,3) {};
            \node(T3) at (5,3) {};
            \node(T4) at (6,3) {};
            \node(T5) at (7,3) {};

            \node[dot] (B1) at (1,0) {};
            \node[dot] (B2) at (2,0) {};
            \node[dot] (B3) at (3,0) {};
            \node[dot] (B4) at (4,0) {};
            \node[dot] (B5) at (5,0) {};
            \node[dot] (B6) at (6,0) {};
            \node[dot] (B7) at (7,0) {};
            \node (B9) at (8,0) {\dots};
            \node[dot] (B10) at (9,0) {};
   
			\draw (T1) to (B1);
            \draw (T1) to (B2);
            \draw (T1) to (B3);
            \draw (T1) to (B4);
            \draw (T1) to (B5);
            \draw (T1) to (B6);
            \draw (T1) to (B7);
            \draw (T1) to (B10);

            \draw (T2) to (B1);
            \draw (T2) to (B2);
            \draw (T2) to (B3);
            \draw (T2) to (B4);
            \draw (T2) to (B5);
            \draw (T2) to (B6);
            \draw (T2) to (B7);
            \draw (T2) to (B10);

            \draw (T3) to (B1);
            \draw (T3) to (B2);
            \draw (T3) to (B3);
            \draw (T3) to (B4);
            \draw (T3) to (B5);
            \draw (T3) to (B6);
            \draw (T3) to (B7);
            \draw (T3) to (B10);

            \draw (T4) to (B1);
            \draw (T4) to (B2);
            \draw (T4) to (B3);
            \draw (T4) to (B4);
            \draw (T4) to (B5);
            \draw (T4) to (B6);
            \draw (T4) to (B7);
            \draw (T4) to (B10);

            \draw (T5) to (B1);
            \draw (T5) to (B2);
            \draw (T5) to (B3);
            \draw (T5) to (B4);
            \draw (T5) to (B5);
            \draw (T5) to (B6);
            \draw (T5) to (B7);
            \draw (T5) to (B10);

            \filldraw[fill=white] (5,3) ellipse (3 and 0.8);

            \node at (5,3) {$\approx \frac{t}{2}$};
		\end{tikzpicture}
         \caption{$s(t)=1$}
         \label{subfig:1}
     \end{subfigure}
        \caption{Examples of graphs $S_{k(s(t),t),r,s(t)}$ in the proof of \cref{thm:upper}.}
        \label{fig:upperexamples}
\end{figure}
    
    \vskip 3mm
    \noindent\textbf{Using $S_{k,r,s}$}

    In order to prove the theorem, we consider the graphs $S_{k(s(t),t),r,s(t)}$ with $k(s(t),t)=\left\lceil\frac{t-s(t)}{2}\right\rceil+1$ and some choice of $s(t)\geq 1$, to be specified later.
    Given that $\lim_{r\rightarrow \infty}\ad(S_{k(s(t),t),r,s(t)})=s(t)-1+2k(s(t),t)=s(t)-1+2\left(\left\lceil\frac{t-s(t)}{2}\right\rceil+1\right)\geq t+1$, we have that $\ad(S_{k(s(t),t),r,s(t)})\geq t$ for sufficiently large $r$.

    Applying \cref{lem:minortosubgraph} and \cref{lem:numberedges}, we only need show that $f(k(s(t),t),s(t),t)=\left(\frac{3}{4}+o(1)\right)\binom{t}{2}$. We show that this holds for various choices of $s(t)$.

    One possible choice for $s(t)$ is $s(t)= \left(\frac{1}{2i}+o(1)\right)t$ for fixed $i\in \N$ (see \cref{subfig:tdiv2} for the case with $i=1$.) Then $k(s(t),t)=\left(\frac{1}{2}-\frac{1}{4i}+o(1)\right)t$. We may then compute that
    \begin{equation*}
        \binom{k(s(t),t)}{2}=\left(\frac{1}{2}-\frac{1}{4i}+o(1)\right)^2\frac{t^2}{2}=\left(\frac{1}{4}-\frac{1}{4i}+\frac{1}{16i^2}+o(1)\right)\binom{t}{2}
    \end{equation*}
    and 
    \begin{equation*}
        k(s(t),t)(t-k(s(t),t))=\left(\frac{1}{2}-\frac{1}{4i}+o(1)\right)\left(1-\left(\frac{1}{2}-\frac{1}{4i}+o(1)\right)\right)t^2=\left(\frac{1}{2}-\frac{1}{8i^2}+o(1)\right)\binom{t}{2}.
    \end{equation*}

    Given that 
    \begin{equation*}
        \left\lfloor\frac{t-k(s(t),t)}{s(t)}\right\rfloor=\left\lfloor\frac{t-\left(\frac{1}{2}-\frac{1}{4i}+o(1)\right)t}{\left(\frac{1}{2i}+o(1)\right)t}\right\rfloor=\left\lfloor\frac{\frac{1}{2}+\frac{1}{4i}+o(1)}{\frac{1}{2i}+o(1)}\right\rfloor=i
    \end{equation*}
    for sufficiently large $t$, we have
    \begin{equation*}
        \left\lfloor\frac{t-k(s(t),t)}{s(t)}\right\rfloor\binom{s(t)}{2}=(i+o(1))\left(\frac{1}{2i}+o(1)\right)^2\frac{t^2}{2}=\left(\frac{1}{4i}+o(1)\right)\binom{t}{2}
    \end{equation*}
    and
    \begin{align*}
        \binom{t-k(s(t),t)-\left\lfloor\frac{t-k(s(t),t)}{s(t)}\right\rfloor s(t)}{2}&=\left(1-\left(\frac{1}{2}-\frac{1}{4i}+o(1)\right)-(i+o(1)) \left(\frac{1}{2i}+o(1)\right)\right)^2\frac{t^2}{2}\\
        &=\left(\frac{1}{16i^2}+o(1)\right)\binom{t}{2}.
    \end{align*}

    Thus we obtain for the value of $f(k(s(t),t),s(t),t)$:  \begin{align*}&\left(\left(\frac{1}{4}-\frac{1}{4i}+\frac{1}{16i^2}+o(1)\right)+\left(\frac{1}{2}-\frac{1}{8i^2}+o(1)\right)+\left(\frac{1}{4i}+o(1)\right)+\left(\frac{1}{16i^2}+o(1)\right)\right)\binom{t}{2}\\&=\left(\frac{3}{4}+o(1)\right)\binom{t}{2},\end{align*} as desired.
    
    Another possible case is $s(t)=o(t)$ (see \cref{subfig:1} for the case $s(t)=1$). An analogous computation to above yields the result in this case (this can informally be seen by letting $i$ tend to infinity). In fact, in this case the result also follows from the approach for $k$-trees discussed above.
\end{proof}

\section{Small graphs}\label{sec:small}

In this section, we prove \cref{thm:small}. We first need the following definitions.

A \emph{(proper) separation} of a graph $G$ is a pair $(A,B)$ such that $A,B\subseteq V(G)$, $A\cup B=V(G)$, $A\setminus B,B\setminus A\neq \emptyset$ and there are no edges between vertices in $A\setminus B$ and $B\setminus A$. The \emph{order} of $(A,B)$ is $|A\cap B|$. We say a graph $G$ is \emph{$k$-connected} if $\v(G)\geq k+1$ and $G$ does not have a separation of order strictly smaller than $k$. Note that complete graphs are the only graphs to not have any separation.

Given a graph $H$ and $k\in \N$, we say a graph $G$ is a \emph{$(H,k)$-cockade} if $G$ is isomorphic to $H$ or if $G$ can be obtained from smaller $(H,k)$-cockades $G'$ and $G''$ by identifying a clique of size $k$ of $G'$ with a clique of size $k$ of $G''$. A simple inductive argument can be used to show that if $G$ is an $(H,k)$-cockade then $\e(G)=\frac{\v(G)-k}{\v(H)-k}\e(H)-\frac{\v(G)-\v(H)}{\v(H)-k}\binom{k}{2}$.

We first prove the following upper bound on the extremal function for minors in $\mathcal K_6^{-4}$, the class of graphs on $6$ vertices and $11$ edges. See, for instance, the introduction of \cite{rolek_extremal_2020}, and references therein, for a summary of similar results on the extremal functions of small graphs. By $K_5^-$ we denote the graph obtained from $K_5$ by removing one edge.

\begin{thm}\label{thm:extremal11}
    If $G$ is a graph such that $\e(G)\geq \frac{5}{2}\v(G)-\frac{7}{2}$, then $G$ contains a minor with $6$ vertices and $11$ edges, unless $G$ is isomorphic to $K_1$, $K_5^-$ or $K_5$.
\end{thm}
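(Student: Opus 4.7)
The plan is to prove the theorem by induction on $n=\v(G)$, with $G$ chosen to be a counterexample minimising $n$ (and, within that, $\e(G)$). The base cases $n\le 6$ are immediate: for $n\le 5$ the density bound $\e(G)\ge\frac{5n-7}{2}$ forces $G$ to be one of the listed exceptions $K_1,K_5^-,K_5$ (by direct enumeration of graphs on at most five vertices meeting the bound), while for $n=6$ it gives $\e(G)\ge 12$, so deleting any single edge of $G$ yields an $11$-edge spanning subgraph of $G$, which is itself a $6$-vertex, $11$-edge minor of $G$.

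For the inductive step, assume $n\ge 7$. I would apply three standard local reductions, each of which produces a smaller graph that still satisfies the density hypothesis strictly. \emph{(R1)} If $v$ has $\deg_G(v)\le 2$, delete $v$: then $\e(G-v)\ge\frac{5n-7}{2}-2 > \frac{5(n-1)-7}{2}$, and any minor of $G-v$ lies in $G$. \emph{(R2)} If an edge $uv$ has $|N_G(u)\cap N_G(v)|\le 1$, contract $uv$: then $\e(G/uv)\ge \e(G)-2 > \frac{5(n-1)-7}{2}$, and minors of $G/uv$ lift to $G$. \emph{(R3)} If $v$ has degree $3$ with two non-adjacent neighbours $x,y$, replace $G$ by $G'=(G-v)+xy$, which is a minor of $G$ (obtained by contracting $vx$ and suppressing multi-edges), and $\e(G')=\e(G)-2$ again passes the inductive threshold. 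Each time the induction returns one of the exceptions $K_1,K_5^-,K_5$ on the reduced graph instead of the desired minor, a short direct check exploiting that the exception has at most $6$ vertices reconstructs a $6$-vertex, $11$-edge minor of $G$ by reinserting the deleted or folded vertex.

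After these reductions, $G$ has $\delta(G)\ge 4$, every edge has at least two common neighbours, and every degree-$3$ vertex would force a triangle neighbourhood that contradicts (R2). The next step is to show $G$ is $3$-connected. If $(A,B)$ is a separation of order $k\le 2$, then completing $A\cap B$ to a clique in each side yields graphs $G_A,G_B$ with $\e(G_A)+\e(G_B)\ge \e(G)+\binom{k}{2}$, and an edge count comparing this to $\frac{5|A|-7}{2}+\frac{5|B|-7}{2}$ shows at least one side meets the inductive threshold on strictly fewer vertices, so induction delivers the minor (which lies in $G$ because the at most one added virtual edge inside the separator is a single-vertex subset of the other side, realisable as a minor). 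For separations of order exactly $3$, the three virtual edges completing the separator into a triangle are realised as a minor using a Steiner tree in the opposite side, which exists because the opposite side has enough edges by the same accounting. Assuming now that $G$ is $3$-connected, $\delta(G)\ge 5$, and every edge has at least two common neighbours, the density $\e(G)\ge\frac{5n-7}{2}$ forces $G$ to contain six pairwise disjoint connected branch sets whose quotient has at least $11$ edges, contradicting that $G$ is a counterexample.

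The main obstacle will be the treatment of order-$3$ separations together with the residual degree-$3$ and degree-$4$ configurations, because all three scenarios lose edges at exactly the critical rate $5/2$ per deleted vertex, leaving no slack in the density accounting. Consequently, the technically demanding portion is a careful case analysis at the boundary of each reduction: verifying that whenever induction returns an exceptional graph $K_1,K_5^-$, or $K_5$, the particular way in which the reduction was performed allows a concrete reinsertion or re-expansion inside $G$ that still produces a $6$-vertex, $11$-edge minor, rather than merely certifying that $G$ itself lies among the listed exceptions.
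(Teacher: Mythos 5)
Your base cases, your decision to induct on $(\v(G),\e(G))$, and your reductions to force 3-connectivity all match the spirit of the paper's argument, and the order-$0$ and order-$1$ separations can indeed be disposed of by the edge-accounting you sketch. However, there are two genuine problems. First, your reduction (R3) does not eliminate degree-$3$ vertices whose neighbourhood is a triangle, since for such a vertex $v$ with $N(v)=\{x,y,z\}$ every edge $vx,vy,vz$ already lies in two triangles and (R2) does not fire either; so you cannot conclude $\delta(G)\ge 4$, and the later claim $\delta(G)\ge 5$ is flatly impossible for a minimal counterexample, because minimality forces $\e(G)=\bigl\lceil\frac{5}{2}\v(G)-\frac{7}{2}\bigr\rceil<\frac{5}{2}\v(G)$, hence $\ad(G)<5$ and some vertex has degree at most $4$.

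Second, and more seriously, the 3-connected endgame is not a proof. Saying that ``the density forces $G$ to contain six pairwise disjoint connected branch sets whose quotient has at least $11$ edges'' is exactly the conclusion to be established, not an argument for it. The paper's proof spends essentially all of its effort here: it shows every edge lies in at least two triangles (as you do), then argues that no $5$-set $A$ has $\e(G[A])\ge 8$ (else Menger's theorem extends $A$ by three disjoint paths to the desired minor), then takes a vertex $u$ of minimum degree (which is $3$ or $4$ since $\ad(G)<5$), and in each case builds the $6$-vertex $11$-edge minor explicitly using the triangle condition and Menger's theorem. That construction --- especially the degree-$3$ case, where $G[N[u]]\simeq K_4$ and one must fan out to two external vertices $v,w$ and route two disjoint paths back to $\{u,y,z\}$ avoiding $x$ --- is the core of the proof and is entirely missing from your sketch. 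Also, your discussion of order-$3$ separations is unnecessary: the proof only needs 3-connectivity, and the density bound $\frac{5}{2}n-\frac{7}{2}$ is too weak to rule out order-$3$ separations anyway. Finally, in the order-$2$ case the accounting is tight enough that one of $G_A,G_B$ may genuinely fail the threshold; the paper handles this by a case split on whether $xy\in E(G)$ and by observing that if $G[A]+xy\simeq K_5$ one can still build the minor directly, a subtlety your sketch glosses over.
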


\begin{proof}
    First note that $\left\lceil\frac{5}{2}n-\frac{7}{2}\right\rceil=-1, 2, 4, 7, 9, 12$ when, respectively, $n=1,2,3,4,5,6$. It is then immediate that the only graphs $G$ with $\v(G)\leq 5$ and  at least $\frac{5}{2}\v(G)-\frac{7}{2}$ edges are $K_1$, $K_5^-$ and $K_5$. If $\v(G)=6$, then $G$ contains at least $12$ edges, and thus the statement also holds in this case. This shows that the theorem holds if $G$ has at most $6$ vertices, and therefore we may assume $\v(G)\ge 7$.

    Towards a contradiction, suppose then that the statement is false, and let $G$ be a counterexample that minimizes $\v(G)$ and then, subject to $\v(G)$ being minimum, minimizes $\e(G)$. The latter condition in particular implies that $\e(G)=\left\lceil\frac{5}{2}\v(G)-\frac{7}{2} \right\rceil<\frac{5}{2}\v(G)$.
    
    First suppose $G$ is 3-connected. If some $e\in E(G)$ is in fewer than 2 triangles, then $$\e(G/e)\geq \e(G)-2\geq \frac{5}{2}\v(G)-\frac{11}{2}=\frac{5}{2}(\v(G/e)+1)-\frac{11}{2}=\frac{5}{2}\v(G/e)-3.$$
    By minimality of $G$, $G/e$, contains a minor on 6 vertices and 11 edges (using that $\v(G/e)\geq 6$ to exclude the small exceptions). As this contradicts our assumptions on $G$, every edge in $G$ lies on at least two triangles.

    Next suppose there exists $A\subseteq V(G)$ of size $5$ such that $\e(G[A])=8$. Let $u\in V(G)\setminus A$. As $G$ is 3-connected, Menger's theorem \cite{menger_zur_1927} implies there exist internally vertex-disjoint $u-A$ paths $P_1,P_2,P_3$ in $G$ (with no internal vertices in $A$). Then, $G[A\cup V(P_1)\cup V(P_2)\cup V(P_3)]$ contains a minor on 6 vertices and at least 11 edges, which can be obtained by contracting all but one edge in each of $P_1,P_2,P_3$. Hence such a set $A$ does not exist.

    Given that $\ad(G)=\frac{2\e(G)}{\v(G)}<5$, there exists $u\in V(G)$ of degree at most 4 (and at least 3, by 3-connectivity).

    Consider the case $\deg_G(u)=4$. As every edge of $G$ is in at least 2 triangles, $\delta(G[N_G[u]])\geq 3$. In particular, $\e(G[N_G[u]])=\frac{1}{2}\ad(G[N_G[u]])\v(G[N_G[u]])\geq \frac{15}{2}$, and as this is an integer $\e(G[N_G[u]])\geq 8$. However, we have excluded such a choice $A=N_G[u]$ earlier.

    Hence, we may suppose that $\deg_G(u)=3$, say $N_G(u)=\{x,y,z\}$. Again as every edge of $G$ is in at least 2 triangles, $G[N_G[u]]$ is necessarily isomorphic to $K_4$. Let $v\in N_G(x)\setminus N[u]$ (such a vertex necessarily exists as otherwise $(\{u,x,y,z\},V(G)\setminus\{u,x\})$ would form a separation of order $2$ in $G$, contradicting 3-connectivity). As proved above, $vx$ is in at least one triangle. If $v$ is adjacent to $y$ or $z$, then $G[\{u,x,y,z,v\}]$ contains at least 8 edges, which we have excluded earlier. Hence, there exists $w\in V(G)\setminus N_G[u]$ which is adjacent to both $x$ and $v$. Again, by Menger's theorem there exist at least three internally vertex-disjoint paths between $\{v,w\}$ and $\{u,y,z\}$. At most one of these can contain $x$. Let $P_1,P_2$ be two of these paths which don't contain $x$, we may also suppose they do not contain any of $\{u,x,y,z,v,w\}$ as internal vertices. Then, $G[\{u,x,y,z,v,w\}\cup V(P_1)\cup V(P_2)]$ contains a minor on 6 vertices and 11 edges, which can be obtained by contracting all but one edge in each of $P_1,P_2$.

    Hence, we may now suppose that $G$ is not 3-connected. As $\v(G)\geq 7$, we may then suppose $G$ is not a complete graph, so let $(A,B)$ be a separation of $G$ of minimal order. In particular, $|A\cap B|\leq 2$.

    We divide the rest of the proof into cases depending on size of $A \cap B$.
    First suppose $|A\cap B|=0$. By minimality of $G$, if $\e(G[A])\geq \frac{5}{2}|A|-2$ (in particular, $G[A]$ cannot be isomorphic to $K_1, K_5^-$ or $K_5$), then $G[A]$ contains a minor on $6$ vertices and 11 edges, which is a contradiction. Hence, we may assume that $\e(G[A])< \frac{5}{2}|A|-2$, and similarly that $\e(G[B])< \frac{5}{2}|A|-2$. Then,
    $$\e(G)=\e(G[A])+\e(G[B])<\frac{5}{2}(|A|+|B|)-4=\frac{5}{2}\v(G)-4<\frac{5}{2}\v(G)-\frac{7}{2},$$
    which is a contradiction to our hypothesis, so this case is not possible.

    Now suppose $|A\cap B|=1$, say $A\cap B=\{x\}$. Then $G$ is connected, and in particular $G[A]$ and $G[B]$ are connected and $|A|,|B|\ge 2$. If $G[A]$ is isomorphic to $K_5$, then $G[A\cup y]$ is a 6-vertex graph with at least 11 edges, where $y$ is a neighbour of $x$ in $B$. Hence, $G[A]$ is not isomorphic to $K_5$, and similarly for $G[B]$. Then, by minimality of $G$, if $\e(G[A])\geq \frac{5}{2}|A|-3$ (note that this implies that $A$ cannot isomorphic to $K_5^-$), $G[A]$ contains a minor on 6 vertices and 11 edges, which is a contradiction. Hence, we may assume that $\e(G[A])< \frac{5}{2}|A|-3$, and similarly that $\e(G[B])< \frac{5}{2}|B|-3$. Then,
    $$\e(G)=\e(G[A])+\e(G[B])<\frac{5}{2}(|A|+|B|)-6=\frac{5}{2}(\v(G)+1)-6=\frac{5}{2}\v(G)-\frac{7}{2},$$
    which contradicts our hypothesis, so this case is not possible.

    Finally, suppose $|A\cap B|=2$; say $A\cap B=\{x,y\}$. If $x,y$ are in different components of $G[A]-xy$, at least one of $x$ or $y$ would be a cut vertex, which would contradict our choice of separation. Hence, there exists an $x-y$ path $P_1$ with at least 2 edges and with internal vertices in $A\setminus B$. Similarly, there exists an $x-y$ path $P_2$ with at least 2 edges and with internal vertices in $B\setminus A$.

    If $G[A]$ is isomorphic to $K_5^-$ or $K_5$, then $G[A\cup V(P_2)]$ contains a minor on 6 vertices and at least 11 edges, which we can obtain by contracting all but 2 of the edges of $P_2$. Hence, $G[A]$, and similarly $G[B]$, are not isomorphic to $K_5^-$ or $K_5$.

    By minimality of $G$, if $\e(G[A])\geq \frac{5}{2}|A|-\frac{7}{2}$ or $\e(G[B])\geq \frac{5}{2}|B|-\frac{7}{2}$, then $G$ contains a minor on $6$ vertices and $11$ edges, which is a contradiction. Given that the number of edges must be an integer, we have that $\e(G[A])\leq \frac{5}{2}|A|-4$ and $\e(G[B])\leq \frac{5}{2}|B|-4$.

    If $xy\in E(G)$, then
    $$\e(G)=\e(G[A])+\e(G[B])-1\leq \frac{5}{2}(|A|+|B|)-9=\frac{5}{2}(\v(G)+2)-9<\frac{5}{2}\v(G)-\frac{7}{2},$$which is a contradiction to our hypothesis.
    
    Hence, $xy\notin E(G)$. Suppose first that $\e(G[A])\leq \frac{5}{2}|A|-\frac{9}{2}$ and  $\e(G[B])\leq \frac{5}{2}|B|-\frac{9}{2}$. Then,
    $$\e(G)=\e(G[A])+\e(G[B])\leq \frac{5}{2}(|A|+|B|)-9=\frac{5}{2}(\v(G)+2)-9<\frac{5}{2}\v(G)-\frac{7}{2},$$which would be a contradiction to our assumptions on $G$. Thus, at least one of the two above inequalities is invalid. Without loss of generality, we may assume that $\e(G[A])> \frac{5}{2}|A|-\frac{9}{2}$ and thus $\e(G[A])\ge \frac{5}{2}|A|-4$. Now this implies $\e(G[A]+xy)=\e(G[A])+1\ge \frac{5}{2}|A|-3>\frac{5}{2}|A|-\frac{7}{2}$. Contracting $P_2$ into an edge, we obtain that $G[A]+xy$ is a minor of $G$. Using the minimality of $G$, we then find that $G[A]+xy$ is isomorphic to $K_5^-$ or $K_5$. The case $G[A]+xy\simeq K_5^-$ is impossible, as it would mean that $\e(G[A])=8<\frac{5}{2}|A|-4$. Thus, we have $G[A]+xy \simeq K_5$. But then by removing superflous vertices and edges from $B$ and contracting all but two of the edges in $P_2$, we obtain a minor of $G$ isomorphic to the graph obtained from $G[A]\simeq K_5^-$ by adding a new vertex adjacent to $x$ and $y$. This is a graph on $6$ vertices and $11$ edges, as desired.
    
    As we have found a contradiction to our initial assumption that $G$ is a smallest counterexample in every possible case, this completes the proof of the theorem. 
\end{proof}

Although \cref{thm:extremal11} is sufficiently strong for our purposes, it might be possible to improve it, as we are not aware of any family of graphs $G$ with no $\mathcal K_6^{-4}$ minor for which $\e(G)\approx\frac{5}{2}\v(G)$. However, $(K_5^-,1)$-cockades do not contain any $\mathcal K_6^{-4}$ minor and contain $\approx \frac{9}{4}\v(G)$ edges. 

We are now ready to prove \cref{thm:small}, which we restate for convenience.

\smallthm*

\begin{proof}\item

\vskip 2pt
\noindent{\underline{\textbf{Upper bounds}}}
\vskip 5pt 

We show that the values in the statement cannot be improved. For $t=2,3$, these values cannot be improved as no graph on $t$ vertices can have more than $\binom{t}{2}$ edges.

    For $t=4,5$, consider the graphs $S_{2,r,t-3}$ as defined in \cref{sec:upper}. One easily verifies that for such $t$ and $r\geq 4$, $$\ad(S_{2,r,t-3})=\frac{2\left(\binom{2}{2}+r\binom{t-3}{2}+2 r (t-3)\right)}{2+r(t-3)}\geq t-1.$$ By \cref{lem:minortosubgraph} and \cref{lem:numberedges}, $S_{2,r,t-3}$ does not contain any minor on $t$ vertices with more than $f(2,t-3,t)$ vertices, which we can directly compute to be $5$ and $8$ for, respectively, $t=4$ and $t=5$.
    
    For $t=6$, consider any $(K_5^-,2)$-cockade $G$ with $\v(G)\geq 26$. First note that $\e(G)=\frac{\v(G)-2}{5-2}\cdot 9-\frac{\v(G)-5}{5-2}\binom{2}{2}=\frac{8}{3}\v(G)-\frac{13}{3}\geq \frac{5}{2}\v(G)$ and so $\ad(G)\geq 5$. However, we claim that $G$ cannot any minor on $6$ vertices and $12$ edges.
 
    It is easy to see that every graph on $6$ vertices with at least $12$ edges is either $3$-connected or contains a $K_5$ subgraph. 
    However, as $G$ is constructed in a tree-like fashion by identifying edges between copies of $K_5^-$, the only $3$-connected minors of $G$ are in fact minors of $K_5^-$ (more generally, for $k=0,1,2$, it is well-known that if $G$ is a $k$-sum of $G_1$ and $G_2$, and $H$ is $3$-connected, then $H$ is a minor of $G$ if and only if $H$ is a minor of $G_1$ or of $G_2$). Hence, $G$ can contain neither a $3$-connected $6$-vertex graph nor $K_5$ as a minor, proving our claim


    \vskip 5pt 
    \noindent{\underline{\textbf{Lower bounds}}}

    \vskip 5pt 
    \noindent{\boldmath$t=2$ \textbf{:}} Any graph with average degree at least one contains an edge, and thus contains a minor on 2 vertices with one edge.

    \vskip 5pt 
    \noindent{\boldmath$t=3$ \textbf{:}} It is well-known that if $G$ is a forest, $\e(G) \leq \v(G)-1$, wand in particular that $\ad(G)=\frac{2\e(G)}{\v(G)}<2$ (if $G$ is non-null). Given that $\ad(G)\geq 3-1=2$, $G$ is not a forest and thus contains a cycle $C$ and thus $G$ contains $K_3$  as a minor.

    \vskip 5pt 
    \noindent{\boldmath$t=4$ \textbf{:}} It is well-known, and easy to see, that $K^{-}_4$-minor-free graphs are the graphs for which every component is a \emph{cactus graph}, that is a connected graph in which every block is either an edge or a cycle. It is also well-known (for instance, \cite[Exercice 4.1.31]{west_introduction_2001}) that if $G$ is a cactus graph, then $\e(G)\leq\left\lfloor\frac{3(\v(G)-1)}{2}\right\rfloor$ edges (the proof proceeds by induction on the number of blocks). Given that $G$ has average degree at least $3$, $\e(G)\ge \frac{3}{2}\v(G)$ and so $G$ contains a $K^{-}_4$ minor, i.e. a minor on four vertices with five edges, as claimed.
    
    \vskip 5pt 
    \noindent{\boldmath$t=5$ \textbf{:}} Dirac \cite[Theorem 1B]{dirac_homomorphism_1964} proved that for any graph $G$ such that $\e(G)\geq 2\v(G)-2$, either $G$ contains a minor on $5$ vertices and $8$ edges or $G$ is a $(K_4,1)$-cockade. Note that in the latter case, $\e(G)=\frac{\v(G)-1}{4-1}\cdot 6-\frac{\v(G)-4}{4-1}\binom{1}{2}=2(\v(G)-1)<2\v(G)$. Hence, if $\ad(G)\geq 4$, we have that $\e(G)\geq 2\v(G)$ and so $G$ contains a minor on $5$ vertices and $8$ edges.

    \vskip 5pt 
    \noindent{\boldmath$t=6$ \textbf{:}} Given that $\d(G)\geq \frac{5}{2}\d(G)$, \cref{thm:extremal11} implies this result directly (noting that none of the small exceptions in \cref{thm:extremal11} have average degree at least $5$).
\end{proof}

\section{Concluding remarks}

We have considered the problem of finding the best possible $\alpha$ such that every graph with average degree at least $t$ contains a minor on $t$ vertices with at least $\left(\alpha -o(1)\right)\binom{t}{2}$ edges; we have shown that $\sqrt{2}-1\leq \alpha\leq \frac{3}{4}$. It would be interesting to further improve these bounds.

We note that in our proof of \cref{thm:main}, we have only used contractions to be able to consider the smallest minor of $G$ such that $\ad(G)\geq t$ and apply \cref{lem:neighbourhoodmindegree}. Once we have obtained that all closed neighbourhoods have minimum degree greater than $\frac{t}{2}$, we only consider subgraphs. In this setup, we cannot improve our lower bound on $\alpha$ beyond $\frac{1}{2}$. Indeed, consider the line graph of the complete graph $K_n$. It is  $t:=2(n-2)$-regular, has closed neighbourhoods of minimum degree $(n-1)>\frac{t}{2}$ and it is not hard to verify that this graph contains no subgraph on $t$ vertices with more than $(1+o(1))n^2=(\frac{1}{2}+o(1))\binom{t}{2}$ edges.

\section*{Acknowledgments}
This research was partially completed at the Second 2022 Barbados Graph Theory Workshop held at the Bellairs Research Institute in December 2022.

\bibliography{refs}

\begin{thebibliography}{10}

\bibitem{delcourt_reducing_2022}
M.~Delcourt and L.~Postle.
\newblock Reducing {Linear} {Hadwiger}'s {Conjecture} to {Coloring} {Small}
  {Graphs}, May 2022.
\newblock URL: \url{http://arxiv.org/abs/2108.01633}.

\bibitem{dirac_homomorphism_1964}
G.~A. Dirac.
\newblock Homomorphism theorems for graphs.
\newblock {\em Mathematische Annalen}, 153:69--80, Feb. 1964.
\newblock \href {https://doi.org/10.1007/BF01361708}
  {\path{doi:10.1007/BF01361708}}.

\bibitem{hadwiger_uber_1943}
H.~Hadwiger.
\newblock Über eine {Klassifikation} der {Streckenkomplexe}.
\newblock {\em Vierteljschr. Naturforsch. Ges. Zürich}, 88:133--143, 1943.

\bibitem{kostochka_k_st_2014}
A.~Kostochka.
\newblock ${K}_{s,t}$ {Minors} in $(s+t)$-chromatic {Graphs}, {II}.
\newblock {\em Journal of Graph Theory}, 75(4):377--386, Apr. 2014.
\newblock \href {https://doi.org/10.1002/jgt.21744}
  {\path{doi:10.1002/jgt.21744}}.

\bibitem{kostochka_k_st_2010}
A.~V. Kostochka.
\newblock On ${K}_{s,t}$ minors in $(s+t)$-chromatic graphs.
\newblock {\em Journal of Graph Theory}, 65(4):343--350, Dec. 2010.
\newblock \href {https://doi.org/10.1002/jgt.20485}
  {\path{doi:10.1002/jgt.20485}}.

\bibitem{mader_homomorphieeigenschaften_1967}
W.~Mader.
\newblock Homomorphieeigenschaften und mittlere {Kantendichte} von {Graphen}.
\newblock {\em Mathematische Annalen}, 174:265--268, Dec. 1967.
\newblock \href {https://doi.org/10.1007/BF01364272}
  {\path{doi:10.1007/BF01364272}}.

\bibitem{mader_homomorphiesatze_1968}
W.~Mader.
\newblock Homomorphiesätze für {Graphen}.
\newblock {\em Mathematische Annalen}, 178:154--168, June 1968.
\newblock \href {https://doi.org/10.1007/BF01350657}
  {\path{doi:10.1007/BF01350657}}.

\bibitem{menger_zur_1927}
K.~Menger.
\newblock Zur allgemeinen {Kurventheorie}.
\newblock {\em Fundamenta Mathematicae}, 10:96--115, 1927.
\newblock \href {https://doi.org/10.4064/fm-10-1-96-115}
  {\path{doi:10.4064/fm-10-1-96-115}}.

\bibitem{nguyen_linear-sized_2022}
T.~H. Nguyen.
\newblock Linear-sized minors with given edge density, Aug. 2022.
\newblock URL: \url{http://arxiv.org/abs/2206.14309}.

\bibitem{norin_dense_2022}
S.~Norin and P.~Seymour.
\newblock Dense minors of graphs with independence number two, May 2022.
\newblock URL: \url{http://arxiv.org/abs/2206.00186}.

\bibitem{norin_limits_2023}
S.~Norin and J.~Turcotte.
\newblock Limits of degeneracy for colouring graphs with forbidden minors, Apr.
  2023.
\newblock URL: \url{https://arxiv.org/abs/2304.13715}.

\bibitem{rolek_extremal_2020}
M.~Rolek.
\newblock The extremal function for ${K}_9^=$ minors.
\newblock {\em Journal of Graph Theory}, 94(2):206--223, June 2020.
\newblock \href {https://doi.org/10.1002/jgt.22515}
  {\path{doi:10.1002/jgt.22515}}.

\bibitem{scheffler_graphs_1986}
P.~Scheffler.
\newblock The graphs of tree-width k are exactly the partial k-trees.
\newblock Sept. 1986.
\newblock \href {https://doi.org/10.13140/RG.2.2.27172.58248}
  {\path{doi:10.13140/RG.2.2.27172.58248}}.

\bibitem{nash_hadwigers_2016}
P.~Seymour.
\newblock Hadwiger’s {Conjecture}.
\newblock In J.~F. Nash and M.~T. Rassias, editors, {\em Open {Problems} in
  {Mathematics}}, pages 417--437. Springer International Publishing, Cham,
  2016.
\newblock \href {https://doi.org/10.1007/978-3-319-32162-2_13}
  {\path{doi:10.1007/978-3-319-32162-2_13}}.

\bibitem{seymour_birs_2017}
P.~Seymour.
\newblock {BIRS} {Workshop}: {Geometric} and {Structural} {Graph} {Theory} -
  {Open} {Problems}, June 2017.
\newblock URL:
  \url{https://drive.google.com/file/d/1SZCJ-bc8qPXKlkfVFyAocX5bU8W5JYbv/view}.

\bibitem{west_introduction_2001}
D.~B. West.
\newblock {\em Introduction to {Graph} {Theory}}.
\newblock Prentice Hall, second edition edition, 2001.

\bibitem{wimer_linear_1987}
T.~V. Wimer.
\newblock {\em Linear {Algorithms} on k-{Terminal} {Graphs}}.
\newblock Dissertation, Clemson University, Aug. 1987.
\newblock URL: \url{https://tigerprints.clemson.edu/arv_dissertations/608}.

\end{thebibliography}
\bibliographystyle{abbrvurl}

\end{document}